\documentclass[a4paper]{amsart} 
\usepackage{pgf,tikz}
\usetikzlibrary{arrows}
\usepackage{mathrsfs}
\usepackage{amsthm}
\usepackage{amsmath}
\usepackage[foot]{amsaddr}
\usepackage{stmaryrd}
\usepackage{amssymb}
\usepackage{ulem}
\usepackage{geometry}
\geometry{a4paper,left=20mm,right=20mm, top=20mm, bottom=20mm}

\newcommand{\comp}{\circ}
\newcommand{\id}{\text{id}}
\newcommand{\QR}[2]{
	   \left.\raisebox{1ex}{\ensuremath{#1}}~\slash~
	      \ensuremath{\mkern-1mu}\ensuremath{\mkern-1mu}
	         \raisebox{-1ex}{\ensuremath{#2}}\right.}

\newtheorem{ddef}{Definition}
\numberwithin{ddef}{section} 
\newtheorem{prop}[ddef]{Proposition}

\newtheorem{lmm}[ddef]{Lemma}
\newtheorem{thm}[ddef]{Theorem}
\newtheorem{rmk}[ddef]{Remark}

\newtheorem{exa}[ddef]{Example}

\newtheorem*{rep@theorem}{\rep@title}
\newcommand{\newreptheorem}[2]{%
\newenvironment{rep#1}[1]{%
 \def\rep@title{#2 \ref{##1}}%
 \begin{rep@theorem}}%
 {\end{rep@theorem}}}

\newreptheorem{theorem}{Theorem}

\newreptheorem{lemma}{Lemma}

\setcounter{secnumdepth}{3}
\setcounter{tocdepth}{3}

\title{Factorization Homology of Polynomial Algebras}
\author{Lennart D\"oppenschmitt}
\address{Department of Mathematics, University of Toronto, Canada}
\email{lennart@math.toronto.edu}
\begin{document}

\maketitle

\begin{abstract}
\noindent
We compute the factorization homology of a polynomial algebra over a 
compact and closed manifold with trivialized tangent bundle up to weak equivalence in a new way.
This calculation is based on the model of a graph complex and an explicit morphism 
into the codomain, which makes it possible to twist the algebra with a Maurer-Cartan 
element and potentially apply other deformations.
\end{abstract}

\section{Introduction}
\label{sec:Introduction}
\noindent
Factorization homology, also known as manifoldic homology, is an 
invariant of topological $n$-manifolds.  In this paper we present a new and more transparent method of its computation.
Based on the work of Beilinson and Drinfeld on factorization 
algebras the notion of factorization homology generalizes the topological 
chiral homology theory by Lurie with coefficients in $n$-disks algebras as well as 
labeled configuration spaces from Salvatore and Segal.
With origins in conformal field theory as well as the configuration space models of mapping spaces
it creates an overlap of various fields of interest and contributes for example to 
topological quantum field theory as an algebraic model for observables in \cite{cis} and others.
Applications can be found numerously in the literature. 
\\
Given an $\mathsf{e}_n$-algebra $A$, the factorization homology $\int_M A$ for a 
framed n-manifold $M$ may be defined by a colimit.
Unfortunately, this is notoriously hard to compute and usually avoided by employing methods of excision.
The necessity of other means of computation is evident by the large number of applications and affiliated concepts.
\\
Markarian has computed in \cite[Proposition 4]{nm} the 
factorization homology of a polynomial algebra using directly its definition as a colimit.  
The main result of this paper is an alternative 
computation of the factorization homology of the same polynomial algebra. 
It relies instead on a graph complex model introduced by Campos and Willwacher in \cite{cw}.
This model for the $n$-disks operad allows us to perform computations with graphs on a combinatorial level.
In choosing this accessible and easily manipulable approach, we 
enable further generalizations and modifications such as twistings of the coefficient 
algebra, which leads to the following improved result.
\begin{reptheorem}{thm:main2}
    Let $M$ be a compact and oriented manifold $M$ with trivialized tangend bundle.
    Let moreover $V$ be the shifted cotangent bundle $V = T^* [1-n] \mathbb{R}^N$ and consider the 
    the polynomial algebra $\mathcal{O} = \mathcal{O} \left( V \right)$.
    Then the factorization homology 
    $\int_M \mathcal{O} \llbracket \hbar \rrbracket$ of $M$
    with coefficients in the twisted polynomial algebra
    $\mathcal{O} \llbracket \hbar \rrbracket = \mathcal{O} ^{\hbar m_1 + \hbar^2 m_2 + \dotsm}$
    is weakly equivalent to the 
    algebra of twisted polynomials $S(H(M) \otimes V)$.
\end{reptheorem}
\noindent
The computation consists of rewriting the derived composition product with a cofibrant 
replacement of the graph complex model and the explicit construction of a map into the desired codomain.
Subsequently it is only left to show that this map is a weak equivalence using multiple nested spectral sequences.
In the course of setting up these spectral sequences, we make use of the Lambrecht- 
Stanley model and results from Sinha in \cite{ds}.

\section*{Acknowledgement}
\noindent
This paper is the result of a research project under the supervision of Thomas Willwacher.
He gave me with great enthusiasm and patience an invaluable insight in his research and helped me to create this paper.
I am very thankful for this opportunity as well as for his guidance and advice.

\section{Background}
\label{sec:Background}
\noindent
For the sake of introducing the proper framework for our computation and presenting 
notational conventions, we briefly recall the construction of the essential tools used in this paper.
\subsection{Operads}
\label{sec:Operads}
For the notational conventions of operads we will follow Fresse's book \cite{bf}.
Let $\left( \mathcal{C}, \otimes, \mathfrak{1} \right)$ be a symmetric monoidal category. A $\Sigma_*$-object in $\mathcal{C}$
is a collection $\left( C(0), C(1), \dotsc \right)$ of $\Sigma_n$-modules $C(n) \in \mathsf{Obj}(\mathcal{C})$ for $n \in \mathbb{N}$.
To a $\Sigma_*$-object we associate the endofunctor $S(C) \in \mathsf{End}(\mathcal{M})$
of a  symmetric monoidal category $\mathcal{M}$ over $\mathcal{C}$, where 
$S(C)$ applied to $M \in \mathsf{Obj}(\mathcal{M})$ is defined to be
\begin{align*}
    S(C,M) = \bigoplus _r ^{\infty} \left( C(r) \otimes M ^{\otimes r}  \right) _{\Sigma_r} 
\end{align*}
Denote by $\mathcal{C}^{\Sigma_*}$ the category of $\Sigma_*$-objects in $\mathcal{C}$ 
where a morphism $f: C \rightarrow D$ consists of a collection of
$\Sigma_*$-equivariant morphisms $f : C(n) \rightarrow D(n)$ in $\mathsf{Hom}(\mathcal{C})$.
Note that the map $S : \mathcal{C}^{\Sigma_*} \rightarrow \mathsf{End}(\mathcal{M})$ 
is functorial.
With the composition of $\Sigma_*$-objects $C$ and $D$ given by
\begin{align*}
    C \comp D = S(C,D)
\end{align*}
and the unit object
\begin{align*}
        \mathcal{I}(n) = \begin{cases} \mathfrak{1} \qquad n=0 \\ 0 \qquad \text{otherwise} \end{cases}
\end{align*}
the category $\left( \mathcal{C} ^{\Sigma_*}, \comp , \mathcal{I} \right) $ obtains 
the structure of a monoidal category over $\mathcal{C}$ and makes
$S : \left( \mathcal{C} ^{\Sigma_*}, \comp , \mathcal{I} \right) \rightarrow
\left( \mathsf{End}(\mathcal{M}), \comp , \text{Id} \right)$ into a monoidal functor.
\\
An operad in $\mathcal{C}$ comprises a $\Sigma_*$-object $\mathcal{P} \in \mathcal{C}^{\Sigma_*}$
together with a composition morphism $\mu : \mathcal{P} \comp \mathcal{P} \rightarrow \mathcal{P}$
and a unit morphism $\eta : I \rightarrow \mathcal{P}$, satisfying the usual monoidal unit and associativity condition.
The composition morphism of an operad can be described by morphisms
\begin{align*}
        \mu_r : \mathcal{P} (r) \otimes \mathcal{P} (n_1) \otimes \dotsm \otimes \mathcal{P} (n_r) \rightarrow \mathcal{P}(n_1 + \dotsm + n_r)
\end{align*}
or equivalently by partial composites
\begin{align*}
        \comp : \mathcal{P}(k) \otimes \mathcal{P}(l) \rightarrow \mathcal{P}(k+l-1)
\end{align*}
Let us furthermore use for an operad $\mathcal{P}$ the notation $\mathcal{P} ^{\vee}$ 
for its Koszul dual operad and $\mathcal{P}^*$ for its linear dual operad.

\subsection{Examples of Operads}
\label{sec:Examples of Operads}

Commonly used operads are the commutative operad $\mathsf{Com}$ and the Lie operad $\mathsf{Lie}$,
which carry the structure of the commutative product $\wedge$ and Lie bracket $\lbrace , \rbrace$ respectively.
The (framed) embedding spaces of $r$ disks $D^n$ into another disk $D^n$
\begin{align*}
        \mathsf{Disks}_n^{\text{(fr)}}(r) = \mathsf{Emb}^{\text{(fr)}}  (D^n \times \lbrace 1, \dotsc, r \rbrace, D^n )
\end{align*}
assemble with the composition of (framed) embeddings the (framed) little $n$-disks operad $\mathsf{Disks}_n^{\text{(fr)}}$ in the category of topological spaces.
The homology of the little $n$-disks operad inherits the operadic structure and is 
denoted by $\mathsf{e}_n = H _{\ast} (\mathsf{Disks}_n)$.
The $\mathsf{e}_n$ operad can be identified with the operadic composition
$\mathsf{Com} \comp \mathsf{Lie} \left[n-1\right] $ and is therefore generated by the commutative product
and the Lie bracket in arity two.
It's Koszul dual is $\mathsf{e}_n ^{\vee}  = \mathsf{e}^{\ast}_n \left[n\right]
= \mathsf{Com}^*\left[ n \right] \comp \mathsf{Lie} \left[ 1 \right]$.

\subsection{Algebras and Modules over Operads}
\label{sec:Algebras and Modules over Operads} 
An algebra over the operad $\mathcal{P}$, or short $\mathcal{P}$-algebra, is an object 
$A \in \mathsf{Obj}(\mathcal{C})$ with a morphism
$\lambda : S \left(\mathcal{P}, A \right) \rightarrow A$ such that the natural
unit, associativity and equivariance relations are satisfied.
This morphism can equivalently be expressed by morphisms
$\lambda_r : \mathcal{P}(r) \otimes A ^{\otimes r} \rightarrow A$.
Denote the category of $\mathcal{P}$-algebras by $\mathsf{alg} _{\mathcal{P}}$ where 
the morphisms are morphisms in $\mathsf{Hom}(\mathcal{C})$ that intertwine with the evaluation morphisms.
\\
A left $\mathcal{P}$-module $M$ is in a similar fashion an object
$M \in \mathsf{Obj}(\mathcal{C} ^{\Sigma_*})$ with an $\Sigma_*$-equivariant left $\mathcal{P}$-action
$\lambda : \mathcal{P} \comp M \xrightarrow{\lambda} M$ compatible with the operads 
associativity and unity relations. Left $\mathcal{P}$-modules together with morphisms 
$\mathsf{Hom}(\mathcal{C} ^{\Sigma_*})$ that intertwine with $\lambda$ form the 
category $\mathsf{mod}_{\mathcal{P}}$.\\
There is a functor
\begin{align*}
        \mathsf{alg} _{\mathcal{P} } \rightarrow \mathsf{mod} _{\mathcal{P}} 
\end{align*}
given by $A \mapsto A ^{\otimes} = \left( 0, A, 0, 0, \dotsc \right)$
\\
Right $\mathcal{P}$-modules are constructed in the same way with a right $\mathcal{P}$-
action, but note that due to the lack of symmetry in the operadic composition 
left and right $\mathcal{P}$-modules are entirely different objects.
The composition product $M \comp _{\mathcal{P}} N$ of a right $\mathcal{P}$-module $M$ 
and a left $\mathcal{P}$-module $N$ is defined through the pushout diagram
\begin{center}
    \begin{tikzpicture}[scale=1]
        \node (00) at (0,0) {$M \comp N$};
        \node (01) at (0,2) {$M \comp \mathcal{P} \comp N$};
        \node (10) at (3,0) {$M \comp _{\mathcal{P}} N$};
        \node (11) at (3,2) {$M \comp N$};
        \draw[->, black,font=\normalsize]
        (01) edge node[above] {$$} (11)
        (00) edge node[above] {$$} (10)
        (01) edge node[left] {$$} (00)
        (11) edge node[right] {$$} (10);
        \draw[black] (2,.5) -- (2.5,.5) -- (2.5,1);
    \end{tikzpicture}
    \end{center}
When taking the composition product of a right $\mathcal{P}$-module and a $\mathcal{P}$-algebra
$A$, we associate to $A$ its left $\mathcal{P}$-module to use the composition product of $\mathcal{P}$-modules.
The derived composition product $M ~\widehat \comp _{\mathcal{P} }  ~ A := \widehat M \comp _{\mathcal{P} } A$
is the composition product of a cofibrant replacement $\widehat M$ with A.
For example, given a (framed) manifold $M$ the spaces
$\mathsf{Disks}_M^{\text{(fr)}}(r) = \mathsf{Emb^{(fr)}} (D^n \times \lbrace 1, \dotsc, r \rbrace, M )$
from a right $\mathsf{Disks}_n^{\text{(fr)}}$-module, where the action of $\mathsf{Disks}_n^{\text{(fr)}}$ is given by a (framed) embedding.

\subsection{The Graph Complex $\mathsf{Graphs}_M$}
\label{sec:Graph Complex}

In their paper \cite{cw} Campos and Willwacher build $\mathsf{Graphs}_M$ as a model for 
the configuration space of a sufficiently well behaved manifold, which we will denote from here on by $M$.
That means that $M$ has to be compact, closed and its tangent bundle $TM$ is trivialized.
The construction of this model is summarized in the following.
\\
Define for $r \geq 0$ and $n \in \mathbb{N}$  the space $\mathsf{Gra}_n(r)$ to be the quotient of the free 
graded commutative algebra generated by elements $s_{ij}$ of degree $n-1$, by the 
relations $s _{ij} = (-1)^n s _{ji}$, where $1 \leq i \neq j \leq r$.
Elements $\Gamma \in \mathsf{Gra}_n(r)$ can be pictured as the linear combination of graphs with $r$ vertices.
The spaces $\mathsf{Gra}_n(r)$ for $r \in \mathbb{N}$ form the 
operad $\mathsf{Gra}_n$ for which the partial operadic composition $\comp_i $ of two graphs is declared to be 
the insertion of the second graph into the $i ^{th}$ vertex of the first graph with summation over all possibilities of 
reconnecting the edges.
\begin{align*}
        \mathsf{Gra}_n (k) \comp _i \mathsf{Gra}_n (l) \rightarrow \mathsf{Gra}_n(k + l - 1)
\end{align*}
The operad $\mathsf{Graphs}_n$ consists of graphs with additional indistinguishable vertices of degree $n$ which are not labeled.
In contrast to the external vertices that are labeled by $1, \dotsc, r$, we call these internal vertices.
Adding internal vertices is made precise by Willwacher under the term of operadic twisting.
As the third and final step we define $\mathsf{Graphs}_M$, as a right dg-module over
$\mathsf{Graphs}_n$. The only difference to $\mathsf{Graphs}_n$ is that every
vertex can be decorated by a cohomology class $\alpha \in H^{\ast} (M)$.

\begin{center}
\begin{tikzpicture}[scale=1]
    \clip(-1,0) rectangle (10,5);
    \node (v1) at (1,1) {$1$}; 
    \node (v2) at (3,1) {$2$}; 
    \node (v3) at (5,1) {$3$}; 
    \node (v4) at (7,1) {$4$}; 
    \node (i1) at (2,2) {$\bullet$}; 
    \node (i2) at (5,2) {$\bullet$}; 
    \node (i3) at (5,3) {$\bullet$}; 
    \node (d1) at (0,2) {$\alpha$}; 
    \node (d2) at (4,4) {$\beta$}; 
    \node (d3) at (6,4) {$\beta$}; 
    \node (d4) at (8,2) {$\gamma$}; 
    \draw
    (v1) circle (6pt)
    (v2) circle (6pt)
    (v3) circle (6pt)
    (v4) circle (6pt);
    \draw[fill=black]
    (i1) circle (3pt)
    (i2) circle (3pt)
    (i3) circle (3pt);
    \draw[-, black, font=\normalsize]
    (v1) -- (v2)
    (1.15,1.15) -- (2,2)
    (2.85,1.15) -- (2,2)
    (2,2) -- (5,2)
    (2,2) -- (5,3)
    (5,2) -- (5,3)
    (5,1.21) -- (5,2)
    (6.813,1.094) -- (5,2)
    (6.846,1.154) -- (5,3);
    \draw[-, dashed, black, font=\normalsize]
    (0.85,1.15) -- (0.1,1.9)
    (5,3) edge (4.10,3.90)
    (5,3) edge (5.9,3.9)
    (7.15,1.15) edge (7.9,1.9);
\end{tikzpicture}
\end{center}
\noindent
The degree of a graph $\Gamma \in \mathsf{Graphs} _M$ is
\begin{align}
\begin{split}
    \label{eq:graphdeg}
        \mathsf{deg}(\Gamma) = n \cdot s - \left( n-1 \right) \cdot e + \sum _{\alpha} \mathsf{deg}_{H^{\ast}} (\alpha)
\end{split}
\end{align}
where $s$ is the number of internal vertices and $e$ the number of edges.
Due to the symmetry properties of edges and vertices we may define a notion of 
orientation for the cases of $n$ odd and even. In the case that $n$ is odd the orientation
is given by the order of external vertices, a direction of every edge and an order of all odd 
decorations. In the case that $n$ is even the orientation is given by an order of edges and an 
order of all odd decorations.
One term of the differential on this module is $\delta_{\text{split}}$, which splits an internal vertex out of both internal and 
external vertices and sums over all possible ways of reconnecting the incident edges.
\begin{align*}
    \delta _{\text{split}} \Gamma = (-1) ^{\vert \Gamma \vert} \sum _{v \in \Gamma}
    \Gamma \circ_v \text{$\circ$-$\bullet$}
\end{align*}
\begin{align*}
        \raisebox{-4.7ex}{
        \begin{tikzpicture}[scale=0.4]
            \clip(0,0) rectangle (4,4);
            \node (v1) at (2,2) {$v$}; 
            \draw (v1) circle (11pt);
            \draw (2.40,2.00) -- (3.43, 2.00);
            \draw (2.12,2.38) -- (2.44, 3.36);
            \draw (1.68,2.23) -- (0.85, 2.84);
            \draw (1.68,1.77) -- (0.85, 1.16);
            \draw (2.12,1.62) -- (2.44, 0.64);
        \end{tikzpicture}}
        \circ_v ~\text{$\circ$-$\bullet$} =
        \raisebox{-6.3ex}{
        \begin{tikzpicture}[scale=0.4]
            \clip(0,0) rectangle (4,4);
            \node (v1) at (2,2.1) {$v$}; 
            \node (i1) at (2,3.1) {$\bullet$}; 
            \draw (v1) circle (11pt);
            \draw[dashed] (2,2.5) circle (28pt);
            \draw[->, black, font=\normalsize] (2,2.5) -> (2,3.0);
            \draw (2.00 + 1.00, 2.50 + 0.00) -- (2.00 + 1.82, 2.50 + 0.00);
            \draw (2.00 + 0.31, 2.50 + 0.95) -- (2.00 + 0.56, 2.50 + 1.73);
            \draw (2.00 - 0.81, 2.50 + 0.59) -- (2.00 - 1.47, 2.50 + 1.07);
            \draw (2.00 - 0.81, 2.50 - 0.59) -- (2.00 - 1.47, 2.50 - 1.07);
            \draw (2.00 + 0.31, 2.50 - 0.95) -- (2.00 + 0.56, 2.50 - 1.73);
        \end{tikzpicture}}
\end{align*}
The other term $\delta _{\text{pair}}$ pairs any two complementary decorations $\alpha$ and $\beta$
as $\langle \alpha, \beta \rangle = (-1) ^{\vert \alpha \vert} \int_M \alpha \wedge \beta$
and connects the vertices to which they are attached to by an edge.
The label or direction of the new edge is subject to conventions.
\begin{align*}
        \delta _{\text{pair}} \left(
        \raisebox{-4.7ex}{
        \begin{tikzpicture}[scale=0.4]
            \clip(0,0) rectangle (3,4);
            \node (e1) at (.5,3) {$1$}; 
            \node (e2) at (.5,1) {$2$}; 
            \node (d1) at (2.5,3.5) {$\alpha^1$}; 
            \node (d2) at (2.5,0.5) {$\beta^2$}; 
            \draw
            (e1) circle (11pt)
            (e2) circle (11pt);
            \draw[-, dashed, black, font=\normalsize]
            (e1) edge (d1)
            (e2) edge (d2);
        \end{tikzpicture}}
        \right) = 
        \raisebox{-4.7ex}{
        \begin{tikzpicture}[scale=0.4]
            \clip(0,0) rectangle (3.5,4);
            \node (f) at (1,2) {$\langle \alpha, \beta \rangle$};
            \node (e1) at (3,3) {$1$}; 
            \node (e2) at (3,1) {$2$}; 
            \draw
            (e1) circle (11pt)
            (e2) circle (11pt);
            \draw[->, black, font=\normalsize]
            (3,2.6) -> (3,1.4);
        \end{tikzpicture}}
\end{align*}
Denote the sum of these two differentials by $\delta = \delta _{\text{split}} + \delta _{\text{pair}}$.
The union of two graphs $\Gamma$ and $\Sigma$ with the orietation given by 
concatinating the order of vertices or edges from left to right is denoted
by $\Gamma \cdot \Sigma$.
The failure of $\delta _{\text{pair}}$ to be a derivation with respect to this product is measured by the Lie bracket 
\begin{align*}
    \lbrace \Gamma, \Sigma \rbrace_G = \delta_{\text{pair}} ( \Gamma \cdot \Sigma )
    - \delta_{\text{pair}} (\Gamma) \cdot \Sigma - (-1) ^{\vert \Gamma \vert} \Gamma \cdot \delta_{\text{pair}} (\Sigma)
\end{align*}
In the literature a linear combination of possibly decorated graphs which only contain internal vertices
is of interest. This so called partition function $\mathsf{z}_M$ satisfies the Maurer-Cartan equation in $\mathsf{Graphs}_M$
\begin{align*}
    \delta \mathsf{z}_M + \frac{1}{2} \lbrace \mathsf{z}_M, \mathsf{z}_M \rbrace_G = 0
\end{align*}
Such an element $\mathsf{z}_M$ then defines a twisting differential $\delta ^{\mathsf{z}_M} = \lbrace \mathsf{z}_M, ~ \rbrace_G $.
The full differential on $\mathsf{Graphs}_M$ is $\delta + \delta ^{\mathsf{z}_M}$.
\subsection{The Polynomial Algebra $\mathcal{O}$}
\label{sec:Polynomial Algebra}
Consider for two fixed positive integers $n$ and $N$ the the polynomial algebra $\mathcal{O}$  
over the shifted cotangent bundle $V = T^* [1-n] \mathbb{R}^N$ of Euclidean space.
\begin{align*}
        \mathcal{O} = \mathcal{O} \left(T^* [1-n] \mathbb{R}^N  \right)
\end{align*}
Given a basis $x_1, \dotsc, x_N$ of $\mathbb{R} ^N$, we may write $\mathcal{O}$ as the free 
commutative algebra in coordinates $p_1, \dotsc, p_N$ of degree $\left(1-n\right) $ and.
$x_1, \dotsc, x_N$ of degree $0$, also denoted by $y_1, \dotsc, y_{2N}$ for convenient notation.
The graded Poisson bracket on $\mathcal{O}$ is uniquely defined by the relations
\begin{align*}
        \lbrace p_i, x_j \rbrace &= \delta _{ij} \\
        \lbrace p_i, p_j \rbrace &= \lbrace x_i, x_j \rbrace = 0
\end{align*}
and the symmetry property $\lbrace f,g \rbrace = (-1) ^{n + \vert f \vert \vert g \vert} \lbrace g,f \rbrace$.
\subsection{Twisting}
\label{sub:Twisting}
$\left( \mathcal{O}, \lbrace ~,~ \rbrace \right)$ is in particular a dg Lie algebra with the zero differential.
Therefore the Maurer-Cartan equation for $m \in \mathcal{O}$ simplifies to $\lbrace m, m \rbrace = 0$.
Any such $m$ satisfying the Maurer-Cartan equation gives rise to a twisting differential 
\begin{align*}
d^m = - \lbrace m, ~ \rbrace
\end{align*}
which is a derivation with respect to the product in $\mathcal{O}$.
The twisted polynomial algebra is given by $\mathcal{O} ^m = \left( \mathcal{O}, \lbrace ~, ~ \rbrace, d^m \right)$.
For the sake of simplicity, let us define as a first step
$\mathcal{O} \llbracket \hbar \rrbracket = \mathcal{O}^m$ for $m = \hbar m_1 + \hbar^2 
m_2 + \dotsm$ with $\hbar > 0$ and $m_i \in \text{MC}(\mathcal{O} )$.
\subsection{Decorated Polynomials}
\label{sub:Decorated Polynomials}
A slight modification of the algebra $\mathcal{O}$ introduced above is
\begin{align*}
        \mathcal{S} = S(H(M) \otimes V)
\end{align*}
the polynomial algebra in formal variables $p_1, \dotsc, p_N$ and $x_1, \dotsc, x_N$, 
each one tensored with a cohomology class $\alpha \in H^{\ast} (M)$.
To make the distinction to polynomials in $\mathcal{O}$ as apparent as possible, we will denote polynomials $F \in \mathcal{S}$ by capital letters.
The degree of a decorated polynomial $F$ is iteratively defined by the degree of its variables
\begin{align*}
    \mathsf{deg} (\alpha \otimes y_k) = \mathsf{deg} (\alpha) + \mathsf{deg} (y_k)
\end{align*}
Polynomials $F,G \in \mathcal{S}$ satisfy the graded commutativity relation
\begin{align*}
    F \cdot G = (-1)^{\vert F \vert \vert G \vert} G \cdot F 
\end{align*}
We consider the differential $\Delta$ on $\mathcal{S}$ that acts in the following way on decorated polynomials.
\begin{align*}
        \Delta(F) = \sum_{\substack{j = 1, \dotsc, N \\ \omega \text{ basis of } H^{\ast}}}
        \Bigg( \frac{\partial}{\partial \left( \omega \otimes p_j \right) } 
        \frac{\partial}{\partial \left( \omega^* \otimes x_j \right) } 
        \Bigg) \Big( F \Big)
\end{align*}
The Poincar\'e duality pairing $\langle \alpha, \beta \rangle$ is non degenerate, because $M$ is compact and oriented.
With this and the Poisson bracket on $\mathcal{O}$ we obtain a Poisson bracket on $\mathcal{S}$.
\begin{align*}
        \lbrace \alpha \otimes y_k, \beta \otimes y_l \rbrace _{\mathcal{S}} = \langle \alpha, \beta \rangle \lbrace y_k, y_l \rbrace 
\end{align*}
for $y_k, y_l \in \lbrace p_1, \dotsc, p_N, x_1, \dotsc, x_N \rbrace$.
This bracket may be expressed as
\begin{align*}
    \lbrace F, G \rbrace _{\mathcal{S}} 
    = \sum_{\substack{j = 1, \dotsc, N \\ \omega \text{ basis of } H^{\ast}}}
    \frac{\partial F}{\partial \left( \omega \otimes p_j \right) } 
    \frac{\partial G}{\partial \left( \omega ^* \otimes x_j \right) } 
    + (-1)^n
    \frac{\partial G}{\partial \left( \omega \otimes p_j \right) } 
    \frac{\partial F}{\partial \left( \omega ^* \otimes x_j \right) } 
\end{align*}

\subsection{Factorization Homology}
\label{sec:Factorization Homology}
For a $\mathsf{Disks}_n^{\text{fr}}$-algebra $A$, we define its algebraic factorization homology over
a manifold $M$ with trivialized tangent bundle to be the derived composition product
\begin{align*}
    \int_M A := C_* \left( \mathsf{Disk}_M \right) ~ \widehat \comp _{\mathsf{e}_n} ~ A
\end{align*}
For compact and closed manifolds Campos and Willwacher have shown that
$\mathsf{Graphs}_M$ is a model for chains $C_* \left( \mathsf{Disks}_M \right) $.
That is, the pair of right module and operad $\left( \mathsf{Graphs}_M \circlearrowleft \mathsf{e}_n \right)$ is weakly equivalent to the pair
$\left( C_* \left( \mathsf{Disks}_M  \right) \circlearrowleft \mathsf{e}_n \right)$. 
We can therefore rewrite for an $\mathsf{e}_n$-algebra $A$ its factorization homology as
\begin{align*}
        \int_M A \simeq \mathsf{Graphs}_M ~\widehat \comp _{\mathsf{e}_n} A
\end{align*}
To compute the derived composition product we choose a cofibrant replacement of $\mathsf{
    Graphs}_M$ using the bar-cobar construction  of right $\mathsf{e}_n$ modules\begin{align*}
        \widehat{\mathsf{Graphs}_M}
        &= \mathsf{Free}_{\mathsf{e}_n}\left(\mathsf{coFree}_{\mathsf{e}_n^{\vee}} \left( \mathsf{Graphs} _M \right) \right) \\
        &= \mathsf{Graphs}_M \comp \mathsf{e}^{\vee}_n \comp \mathsf{e}_n
\end{align*}

\section{Computation of Factorization Homology}
\label{sec:Compution of Factorization Homology}
\noindent
Now that the framework is set, we can prove the main theorem, which computes the 
factorization homology of the polynomial algebra introduced above.
\begin{thm}
    \label{thm:main1}
    Let $M$ be a compact and oriented manifold $M$ with trivialized tangend bundle.
    Let moreover $V$ be the shifted cotangent bundle $V = T^* [1-n] \mathbb{R}^N$ and consider the 
    the polynomial algebra $\mathcal{O} = \mathcal{O} \left( V \right)$.
    Then the factorization homology 
    $\int_M \mathcal{O}$ of $M$
    with coefficients in the polynomial algebra $\mathcal{O}$
    is weakly equivalent to the algebra of twisted polynomials $S(H(M) \otimes V)$.
\end{thm}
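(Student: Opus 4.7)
The plan is to exploit the cofibrant replacement recalled above to turn the derived composition product $\int_M \mathcal{O} \simeq \mathsf{Graphs}_M \widehat{\comp}_{\mathsf{e}_n} \mathcal{O}$ into the explicit chain complex $\mathsf{Graphs}_M \comp \mathsf{e}_n^{\vee}(\mathcal{O})$, and then to compare it directly with $\mathcal{S} = S(H(M)\otimes V)$ via an explicit map $\Phi$. On generators, $\Phi$ should take a graph $\Gamma$ all of whose external vertices $v$ are decorated by a cohomology class $\alpha_v$ and labeled by a monomial in $\mathcal{O}$, and send each generator $y_k$ sitting at $v$ to the decorated variable $\alpha_v \otimes y_k \in H(M)\otimes V$, the product being taken in $\mathcal{S}$. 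Graphs carrying any edges, any internal vertices, or any nontrivial $\mathsf{e}_n^{\vee}$-decoration are sent to zero. The map $\Phi$ is then extended by $\mathcal{S}$-multilinearity across external vertices.

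The first step is to verify that $\Phi$ is a chain map. The term $\delta_{\text{split}}$ produces graphs with strictly more internal vertices or edges and therefore vanishes in $\mathcal{S}$ after applying $\Phi$; the genuine content lies in $\delta_{\text{pair}}$. When $\delta_{\text{pair}}$ pairs two decorations $\alpha^i$ and $\beta^j$ sitting on distinct external vertices $i,j$, it produces the factor $\langle \alpha,\beta\rangle$ together with an edge, and on the algebra side the $\mathsf{e}_n^{\vee}$-cooperation followed by the $\mathsf{e}_n$-action on $\mathcal{O}$ contributes exactly a Poisson bracket $\lbrace y_k, y_l\rbrace$ of the generators at the two vertices. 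Combined these match the defining formula $\lbrace \alpha\otimes y_k,\beta\otimes y_l\rbrace_{\mathcal{S}} = \langle \alpha,\beta\rangle \lbrace y_k, y_l\rbrace$ from section \ref{sub:Decorated Polynomials}; sign bookkeeping will be routine but tedious because of the shift $V = T^{\ast}[1-n]\mathbb{R}^N$ and the degree $n-1$ of edges.

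The genuine obstacle is the proof that $\Phi$ is a quasi-isomorphism, and the plan here is to run a chain of nested spectral sequences, as alluded to after the reptheorem statement. The outermost filtration will be by the total polynomial degree in the image, i.e.\ by the number of external vertices, which is preserved by all pieces of the differential. On each fixed piece I would then filter by the number of edges so as to make $\delta_{\text{split}}$ the leading differential on the $E^0$-page; the resulting associated graded should reduce, external-vertex by external-vertex, to the Lambrechts--Stanley model applied to the configuration spaces of $M$, via the results of Campos--Willwacher \cite{cw} and Sinha \cite{ds}. On the $E^1$-page one must identify the remaining $\delta_{\text{pair}}$-action with the Koszul differential on the symmetric algebra of $H(M)\otimes V$, at which point the computation collapses and yields $\mathcal{S}$.

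The hard part will be controlling the convergence of these nested spectral sequences and verifying that the various compatible filtrations are in fact bounded on each arity component, so that no extension problems are hidden; this is where the compactness of $M$ and the finiteness of $H^{\ast}(M)$ enter crucially. Once convergence is established and the $E^{\infty}$-page is identified with $S(H(M)\otimes V)$, the explicit map $\Phi$ shows that the quasi-isomorphism is induced precisely by $\Phi$, completing the proof and setting up the framework in which the twisted variant (the reptheorem) will be obtained by adding an $\hbar$-filtered perturbation to the differential and rerunning the same argument.
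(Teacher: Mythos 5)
Your overall strategy matches the paper at a coarse level (cofibrant replacement, explicit map, nested spectral sequences, Lambrechts--Stanley), but the two load-bearing ingredients are wrong or missing.

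The map. You declare that $\Phi$ kills every graph with an edge. That is not the paper's map, and it cannot work. The paper's $\varphi'$ sends a graph with an edge $(i,j)$ to the result of applying the bidifferential operator
\[
\Xi_{ij}=\sum_k \lbrace y_k,\tilde y_k\rbrace\,\Bigl(\tfrac{\partial}{\partial y_k}\Bigr)_{(i)}\Bigl(\tfrac{\partial}{\partial \tilde y_k}\Bigr)_{(j)}
\]
to the polynomials at vertices $i$ and $j$, and a decoration $(i,\alpha)$ to the operator $\Xi_{i\alpha}=\sum_k(\alpha\otimes y_k)\bigl(\tfrac{\partial}{\partial\tilde y_k}\bigr)_{(i)}$; only \emph{internal} vertices kill a graph. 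With your definition the chain-map check is already dead: $\delta_{\text{pair}}$ produces a graph \emph{with} an edge, so $\Phi\,\delta_{\text{pair}}=0$ on the nose, whereas $\Delta\Phi$ is generically nonzero. You gesture at the edge ``contributing a Poisson bracket via the $\mathsf{e}_n^\vee$-cooperation'', but nothing in your definition of $\Phi$ makes that happen --- there is no derivative anywhere in your formula. Your $\Phi$ is also not even well-defined on the quotient $\mathsf{Graphs}_M\comp_{\mathsf{e}_n}\mathcal{O}$: when $n$ is odd, graphs with double edges between external vertices survive in the quotient and cannot be reduced to edgeless representatives via the $\mathsf{Lie}$ action, and independently the relation identifying $\Gamma\circ_1(\circ\;\circ)\otimes f\otimes g$ with $\Gamma\otimes fg$ is not respected by ``attach the vertex decoration to every generator of the monomial''.

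The filtration scheme. The differential on $\mathcal{D}=\mathsf{Graphs}_M\comp\mathsf{Com}^*[n]\comp\mathsf{Lie}^*[1]\comp\mathsf{Com}\comp V$ is not just $\delta_{\text{split}}+\delta_{\text{pair}}$; it also contains the partition-function twist $\delta^{\mathsf{z}_M}$ and the four bar--cobar pieces $d^{\mathsf{com}}_{\mathsf{mod}},d^{\mathsf{com}}_{\mathsf{alg}},d^{\mathsf{lie}}_{\mathsf{mod}},d^{\mathsf{lie}}_{\mathsf{alg}}$. Your two filtrations (by external vertices, then by edges) do not engage any of these. The paper instead runs four filtrations --- by $r-l-\mathsf{deg}_{\mathcal{O}}$, by $e-v$, by $s+(n-1)c-\mathsf{deg}$, and by $-c$ --- chosen precisely so that first $d^{\mathsf{lie}}_{\mathsf{mod}},d^{\mathsf{com}}_{\mathsf{alg}}$ drop out, then $\delta_{\text{pair}}$ drops out entirely (along with $\Delta$ on $\mathcal{S}$, so the Lambrechts--Stanley model appears \emph{without} its differential), then the surviving part of $\delta^{\mathsf{z}_M}$ cancels against $\delta_{\text{split}}$, and finally the last page is finished by acyclicity of the Harrison complex $\mathsf{Lie}^*[1]\comp\mathsf{Com}\comp V$ and Koszul duality of $\mathsf{Lie}[n-1]$ and $\mathsf{Com}^*[n]$. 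Your plan keeps $\delta_{\text{pair}}$ alive and asks it to become a Koszul differential on $S(H(M)\otimes V)$ at $E^1$; that is a different route, unsubstantiated, and it leaves all the bar--cobar terms and $\delta^{\mathsf{z}_M}$ unaccounted for. Both the map construction and the convergence argument need to be reworked along the lines of the actual $\Xi$-operator definition and the four-stage filtration.
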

\subsection{Map Construction}
\label{sub:Map Construction}
To perform the calculation, we start by rewriting the derived composition product
with a cofibrant replacement of $\mathsf{Graphs}_M$.
\begin{align*}
        \int_M \mathcal{O}
        &= \widehat{\mathsf{Graphs} _M} ~\comp _{\mathsf{e}_n} \mathcal{O} \\
        &= \mathsf{Free}_{\mathsf{e}_n}\left(\mathsf{coFree}_{\mathsf{e}_n^{\vee}}
            \left( \mathsf{Graphs} _M \right) \right) ~\comp _{\mathsf{e}_n} \mathcal{O} \\
        &= \mathsf{coFree}_{\mathsf{e}_n^{\vee}} \left( \mathsf{Graphs} _M \right) ~\comp \mathcal{O} \\
        &= \mathsf{Graphs} _M ~\comp \mathsf{e}_n^{\vee} \comp \mathcal{O} \\
        &= \mathsf{Graphs} _M \comp \mathsf{Com}^* \left[ n \right]  \comp \mathsf{Lie}^*\left[ 1 \right]  \comp \mathsf{Com} \comp V
\end{align*}
We denote this complex by $\mathcal{D}$ as this will be the domain of the map we are constructing in this section.
It is equipped with the differential
\begin{align*}
    \partial = \delta _{\text{split}} + \delta _{\text{pair}} + \delta^{\mathsf{z}_M} + d ^{\mathsf{com}} _{\mathsf{mod}} + d ^{\mathsf{com}} _{\mathsf{alg}} + d ^{\mathsf{lie}} _{\mathsf{mod}} + d ^{\mathsf{lie}} _{\mathsf{alg}} 
\end{align*}
The differentials of the form $d ^{\mathsf{A}} _{\mathsf{B}}$ are induced by twisting 
morphisms from the (co)operad $\mathsf{A}$ into the algebra or module specified by $\mathsf{B}$.
The action of $d ^{\mathsf{com}} _{\mathsf{mod}}$ is explained in more depth in the proof of lemma (\ref{lmm:isomod}).
$\mathcal{D}$ inherits a Lie bracket from $\mathsf{Graphs}_M$
\begin{align*}
    \lbrace \Gamma \otimes f_I, \Sigma \otimes f_J \rbrace _{\mathcal{D}}  = \lbrace \Gamma, \Sigma \rbrace_{G} \otimes f _{IJ} 
\end{align*}
With the composition of operads, we can moreover rewrite the codomain as $\mathcal{S} = \mathsf{Com} \comp H^*(M) \comp V$.
Let us first consider the maps
\begin{align*}
    \varphi_r' \quad : \quad \mathsf{Graphs}_M(r) \otimes \mathcal{O} ^{\otimes r} &\rightarrow \mathcal{S} \\
    \Gamma \otimes f_1 \otimes \cdots \otimes  f_r 
    &\mapsto \mu _{\text{mult}} \Bigg( \prod _{\substack{(i,j) \\ (i, \alpha)}} \Xi _{ij} \Big(f_1 \otimes \cdots \otimes f_r\Big) \Bigg)
\end{align*}
where every edge $(i,j)$ and decoration $(i,\alpha)$ is associated to an operator $\Xi _{ij}$
or $\Xi _{i\alpha}$ respectively, acting on the $i ^{th}$ and/or $j ^{th}$ factor in the tensor product.
\begin{align*}
    \Xi _{ij} &= \sum^{2N}_{k=1} \lbrace y_k, \tilde y_k \rbrace  \left( \frac{\partial}{\partial y_k} \right)_{(i)}
        \left( \frac{\partial}{\partial \tilde y_k} \right)_{(j)} \qquad \qquad
    \Xi _{i \alpha} = \sum^{2N}_{k=1} \left( \alpha \otimes y_k \right)
        \left( \frac{\partial}{\partial \tilde y_k} \right)_{(i)}
\end{align*}
Any graph that contains one or more internal vetrices gets send to zero by this map.
Subsequently $\mu _{\text{mult}} : f_1 \otimes \dotsm \otimes f_k \mapsto f_1 \cdot \dotsm
\cdot f_k$ replaces the tensor product with the product in $\mathcal{S}$. Combined this gives the map
\begin{align*}
        \varphi' : \bigoplus _r \mathsf{Graphs} _M(r) \otimes \mathcal{O}^{\otimes r}  \rightarrow S \left( H(M) \otimes V \right)
\end{align*}
\begin{exa}
    These definitions should become clearer by looking at an example of a graph in arity 
    three acting on functions $f,g,h \in \mathcal{O}$ in the case than $n$ is odd.
\begin{align*}
    \varphi'
    \left( 
        \raisebox{-4.7ex}{
        \begin{tikzpicture}[scale=0.4]
        \clip(0,0) rectangle (4,4);
        \node (e1) at (2.5,3) {$1$}; 
        \node (e2) at (1.5,1) {$2$}; 
        \node (e3) at (3.5,1) {$3$}; 
        \node (d1) at (0.5,3) {$\alpha$}; 
        \draw
        (e1) circle (11pt)
        (e2) circle (11pt)
        (e3) circle (11pt);
        \draw[->, black, font=\normalsize]
        (2.3,2.66) -- (1.68,1.34);
        \draw[->, black, font=\normalsize]
        (1.9,1.0) -- (3.1,1.0);
        \draw[-, dotted, thick, black, font=\normalsize]
        (e1) edge (0.9,3.0);
        \end{tikzpicture}}
        \otimes f \otimes g \otimes h
    \right)
    = \sum _{j,k,l = 1} ^{2N} \lbrace y_k, \tilde y_k \rbrace 
    \lbrace y_l, \tilde y_l \rbrace 
    \left( \alpha \otimes y_j \right) \frac{\partial^2 f}{\partial \tilde y_j \partial y_k}
    \frac{\partial^2 g}{\partial \tilde y_k \partial y_l} \frac{\partial h}{\partial \tilde y_l} 
\end{align*}
\end{exa}
\noindent
Next, we need to check that this map passes on to the composition of
$\mathsf{Graphs} _M$ and $\mathcal{O}$ as $\mathsf{e}_n$-modules.
For this it is necessary for the action of the generators of $\mathsf{e}_n$ 
on the module and the algebra to coincide when $\varphi'$ is applied.
The commutative product and Lie bracket include into $\mathsf{Graphs}_M$
as $( \circ\text{ }\circ )$ and $( \circ\text{-}\circ )$ respectively.
\begin{rmk}
\label{rmk:act}
    In the following, calculations edges and decorations exhibit the same behavior, it is 
    therefore possible to reduce these calculations without loss of generality to non decorated graphs.
    Starting with the simplest case of only one edge in a graph, say between the vertices $1$ and $2$, the product rule implies
    \begin{align*}
        & ~~~ \Xi _{12} \left( f_a f_b  \otimes f_2 \otimes \dotsm \otimes f _{r}  \right) \\
        &= \sum^{N}_{k=1} \lbrace y_k, \tilde y_k \rbrace ~ \frac{\partial (f_a f_b)}{\partial y_k} 
        \otimes \frac{\partial f_2}{\partial \tilde y_k}\otimes \dotsm \otimes f _{r} \\
        &= \sum^{2N}_{k=1} \lbrace y_k, \tilde y_k \rbrace ~ \left( f_a \cdot \frac{\partial f_b}{\partial y_k} 
        + \frac{\partial f_a}{\partial y_k} \cdot f_b \right)
        \otimes \frac{\partial f_2}{\partial \tilde y_k}\otimes \dotsm \otimes f _{r} \\
        &= \left( \Xi _{aj} + \Xi _{bj} \right) \left( f_a \otimes f_b  \otimes f_2 \otimes \dotsm \otimes f _{r}  \right) \\
    \end{align*}
For an index set $J = \lbrace j_1, \dotsc, j_k \rbrace$ we write $I^C$ for the
compliment of an index subset $I \subset J$.
Abbreviate moreover for such an index set the products $\Xi _{i J} = \Xi _{i j_1} \dotsm \Xi _{i j_k}$ and
$f_J = f _{j_1} \otimes \dotsm \otimes f _{j_k}$.
This amounts to the more general case where we take into account multiple edges at one vertex.
\begin{align*}
        \Xi _{1 J} \left( f_a f_b  \otimes f_2 \otimes \dotsm \otimes f _{r}  \right)
        = \sum _{I \subset J}\Xi _{a I} \Xi _{b I^C}\left( f_a \otimes f_b  \otimes f_2 \otimes \dotsm \otimes f _{r}  \right)
\end{align*}
Take $J$ to be the index set of all vertices connected to $1$. Finally this leads to
\begin{align*}
    & \varphi' \Big( \Gamma \otimes f_a f_b \otimes \cdots \otimes f_{r} \Big) \\
    &= \mu _{\text{mult}} \Bigg( \prod _{\substack{(i,j)}} \Xi _{ij}
    \Big(f_a f_b \otimes \cdots \otimes f_{r}\Big) \Bigg) \\
    &= \mu _{\text{mult}} \Bigg( \Xi _{1 J} \prod _{\substack{(i,j) \\ i,j \neq 1}} \Xi _{ij}
    \Big(f_a f_b \otimes \cdots \otimes f_{r}\Big) \Bigg) \\
    &=  \mu _{\text{mult}} \Bigg( \sum _{I \in \mathcal{P} (A_1)} 
    \Xi _{1 I} \Xi _{2 I^C}  \prod _{\substack{\text{edge} \\ (i,j) \\ i,j \neq 1}} \Xi _{ij}
    \Big(f_a \otimes f_b \otimes \cdots \otimes f_{r}\Big) \Bigg) \\
    &= \varphi' \Big( \left( \Gamma \comp_1 \big(\text{$\circ$ $\circ$}\big) \right) \otimes f_a \otimes f_b \otimes \cdots \otimes f_{r} \Big) \\
\end{align*}
The calculation for the generator $\lbrace ~,~ \rbrace$ is very similar,
since we can write $\lbrace f_a, f_b \rbrace = \mu _{mult} \left( \Xi _{ab} \left( f_a \otimes f_b \right) \right)$.
This additional factor $\Xi _{ab}$ in the calculation above corresponds then to an additional edge between the vertices which are being inserted in $\Gamma$.
Therefore we need to compose $\Gamma$ with
$(\circ$-$\circ)$ instead of $\left(\circ~\circ\right)$.
This means we have
\begin{align*}
        \varphi' \Big( \Gamma \otimes \lbrace f_a, f_b \rbrace \otimes f_2 \otimes \cdots \otimes f_{r} \Big)
        &= \varphi' \Big( \left( \Gamma \comp_1 \big(\text{$\circ$-$\circ$}\big) \right) \otimes f_a \otimes f_b \otimes \cdots \otimes f_{r} \Big) \\
\end{align*}
\end{rmk}
\noindent
Hence in the following diagram the induced quotient map $\widetilde \varphi$ is well defined and we can establish the desired 
morphism $\varphi = \widetilde \varphi \circ \pi$ as a composition.
\begin{center}
\begin{tikzpicture}[scale=1]
\label{fig:phi}
    \node (gd) at (0,4) {$\mathsf{Graphs} _M ~\widehat \comp _{\mathsf{e}_n} \mathcal{O}$};
    \node (gn) at (0,2) {$\mathsf{Graphs} _M \comp _{\mathsf{e}_n} \mathcal{O}$};
    \node (gnr) at (1.3,2) {$$};
    \node (gm) at (0,0) {$\bigoplus _r \mathsf{Graphs} _M(r) \otimes \mathcal{O} ^{\otimes r} $};
    \node (gmr) at (1.8,0.0) {$$};
    \node (s) at (6,4) {$S( H(M) \otimes V)$};
    \node (sr) at (6.3,3.8) {$$};
    \draw[->, black, font=\normalsize]
    (gd) edge node[left] {$\pi$} (gn)
    (gnr) edge node[below] {$\widetilde \varphi$} (s)
    (gmr) edge node[below] {$\varphi'$} (sr)
    (gm) edge node[left] {$$} (gn);
    \draw[->, dashed, black, font=\normalsize]
    (gd) edge node[above ] {$\varphi$} (s);
\end{tikzpicture}
\end{center}
For the sake of simplicity we will abbriviate
$\varphi \left( \Gamma \otimes f_I \right) = \Gamma \comp f_I$
\begin{prop}
    \label{prop:dcptl}
    The map $\varphi : \left( \mathcal{D}, \partial, \lbrace , \rbrace_{\mathcal{D}} \right)
    \rightarrow \left( \mathcal{S}, \Delta, \lbrace , \rbrace_{\mathcal{S}}  \right)$ is
    a morphism of dg Lie modules over $\mathsf{e}_n$.  More specifically,
    $\varphi \delta _{\text{pair}} = \Delta \varphi$ 
    whereas all other terms of $\partial$ vanish on $\mathcal{S}$.
\end{prop}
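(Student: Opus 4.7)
The plan is to verify the differential identity $\varphi \partial = \Delta \varphi$ by handling each of the seven summands of $\partial$ separately, and then to check compatibility with the Lie bracket; the $\mathsf{e}_n$-module compatibility is already secured by the way $\varphi$ was built to factor through the quotient $\comp_{\mathsf{e}_n}$ in the diagram preceding the proposition. Two summands vanish automatically: both $\delta_{\text{split}}$ and $\delta^{\mathsf{z}_M}$ produce graphs containing at least one internal vertex---the former by definition, and the latter because $\mathsf{z}_M$ consists entirely of internal-vertex graphs whose bracket against $\Gamma$ attaches them to external vertices---and $\varphi'$ kills any such graph. The four bar-cobar differentials $d^{\mathsf{com}}_{\mathsf{mod}}, d^{\mathsf{com}}_{\mathsf{alg}}, d^{\mathsf{lie}}_{\mathsf{mod}}, d^{\mathsf{lie}}_{\mathsf{alg}}$ implement the twisting between the cooperad $\mathsf{e}_n^\vee$ and the operad $\mathsf{e}_n$ in the cofibrant replacement; each of them removes an $\mathsf{e}_n$-generator on one side and places it on the other. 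Since Remark~\ref{rmk:act} shows that $\varphi'$ intertwines the module-side and algebra-side actions of the generators $(\circ~\circ)$ and $(\circ\text{-}\circ)$ of $\mathsf{e}_n$, all four of these terms drop to zero after passing to $\mathcal{S}$.

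The core computation is $\varphi \delta_{\text{pair}} = \Delta \varphi$, which I would prove by matching contributions pair of decorations by pair of decorations. On the left, $\delta_{\text{pair}}$ selects a pair $(\alpha,\beta)$ of decorations sitting on vertices $i,j$ of $\Gamma$, replaces it by the scalar $\langle \alpha, \beta \rangle$, and inserts a new edge $(i,j)$; after $\varphi'$ the factor $\Xi_{i\alpha} \Xi_{j\beta}$ is thus replaced by $\langle \alpha, \beta \rangle \, \Xi_{ij}$. On the right, $\Delta$ sums over variable pairs $(\omega \otimes p_k, \omega^* \otimes x_k)$ and differentiates $\varphi'(\Gamma \otimes f_I)$; the only nonvanishing contributions come from the monomial factors $\alpha \otimes y_k$ and $\beta \otimes y_l$ produced by $\Xi_{i\alpha}$ and $\Xi_{j\beta}$, and the two surviving derivations recombine precisely into $\{y_k,\tilde y_k\}(\partial/\partial y_k)_{(i)}(\partial/\partial \tilde y_k)_{(j)} = \Xi_{ij}$, weighted by the Poincar\'e pairing $\langle \alpha, \beta \rangle$ that forces $\omega = \alpha$ and $\omega^* = \beta$. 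Summing over all decoration pairs in $\Gamma$ yields the claimed equality.

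The Lie-bracket compatibility then reduces to this main identity. The graph bracket $\{\Gamma,\Sigma\}_G$ is by construction the failure of $\delta_{\text{pair}}$ to be a derivation of the disjoint-union product, and a parallel calculation shows that $\{F,G\}_{\mathcal{S}}$ is the failure of the second-order operator $\Delta$ to be a derivation of the polynomial product on $\mathcal{S}$. Once $\varphi$ is checked to be multiplicative---immediate from the fact that edges and decorations of $\Gamma \cdot \Sigma$ live either inside $\Gamma$ or inside $\Sigma$ (never crossing between them), so that $\prod \Xi$ factorises and $\mu_{\text{mult}}$ is itself the polynomial product---the identity $\varphi \delta_{\text{pair}} = \Delta \varphi$ forces $\varphi\,\{\cdot,\cdot\}_{\mathcal{D}} = \{\varphi\cdot,\varphi\cdot\}_{\mathcal{S}}$ by taking the cross-term on each side. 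The principal obstacle throughout is sign bookkeeping in the $\delta_{\text{pair}}$ computation, where graph-orientation conventions, Koszul reordering signs on $\mathcal{S}$, and the symmetry properties of the Poisson bracket all interact; everything else is formal or combinatorial.
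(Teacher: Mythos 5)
Your proposal is correct and follows essentially the same outline as the paper's proof: kill $\delta_{\text{split}}$ and $\delta^{\mathsf{z}_M}$ because they introduce internal vertices, match $\delta_{\text{pair}}$ against $\Delta$ by comparing decoration pairs with variable pairs, and deduce bracket compatibility from multiplicativity of $\varphi$ together with the formula for $\{\,,\,\}_G$ as the defect of $\delta_{\text{pair}}$ being a derivation. The one place you go further than the paper is the treatment of the four bar-cobar differentials: the paper's proof only addresses $\delta_{\text{split}}$, $\delta^{\mathsf{z}_M}$, and $\delta_{\text{pair}}$ explicitly, leaving it to the reader that the bar-cobar pieces disappear because $\varphi$ factors through $\pi$. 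Your explanation of this is in the right spirit, but the precise mechanism is worth stating correctly: $\varphi\circ d^{\mathsf{com}}_{\mathsf{mod}}$ and $\varphi\circ d^{\mathsf{com}}_{\mathsf{alg}}$ are \emph{not} each zero---after one application of the differential the cooperadic weight can drop to zero so $\pi$ no longer annihilates the result---rather they cancel each other, and likewise for the $\mathsf{Lie}$ pair, exactly because Remark~\ref{rmk:act} shows that inserting $(\circ~\circ)$ or $(\circ\text{-}\circ)$ into $\Gamma$ has the same $\varphi'$-image as multiplying or bracketing the corresponding $f_i$'s. Saying they ``drop to zero'' individually overstates what happens; otherwise the argument is sound and slightly more complete than the paper's own write-up.
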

\begin{proof}
    Due to the commutativity of the partial derivatives it is sufficient to verify the claim on the following simple case.
    We are using here that $\Delta$ applied to monomials in $\mathcal{S}$ vanishes.
    \begin{align*}
        &~~ \left( \Delta \varphi \right) \left( 
                        \raisebox{-4.7ex}{
                        \begin{tikzpicture}[scale=0.4]
                        \clip(0,0) rectangle (3,4);
                        \node (e1) at (.5,3) {$1$}; 
                        \node (e2) at (.5,1) {$2$}; 
                        \node (d1) at (2.5,3.5) {$\alpha$}; 
                        \node (d2) at (2.5,0.5) {$\beta$}; 
                        \draw
                        (e1) circle (11pt)
                        (e2) circle (11pt);
                        \draw[-, dashed, black, font=\normalsize]
                        (e1) edge (d1)
                        (e2) edge (d2);
                        \end{tikzpicture}}
                        \otimes f \otimes g \right) \\
        &= \Delta \left( \sum _{k, l} (\alpha \otimes y_k)
            \frac{\partial f}{\partial \tilde y_k} ( \beta \otimes y_l) \frac{\partial g}{\partial \tilde y_l}\right) \\
        &= \sum _{k, l}
        \lbrace \alpha \otimes y_k,  \beta \otimes y_l \rbrace _{\mathcal{S}} 
        \frac{\partial f}{\partial \tilde y_k}  \frac{\partial g}{\partial \tilde y_l} \\
        &= \sum _{k, l} \langle \alpha, \beta \rangle
        \lbrace y_l, y_k \rbrace
        \frac{\partial f}{\partial \tilde y_k} \frac{\partial g}{\partial \tilde y_l} \\
        &= \sum _{l }\langle \alpha, \beta \rangle
        \lbrace y_l, \tilde y_l \rbrace
        \frac{\partial f}{\partial y_l} \frac{\partial g}{\partial \tilde y_l} \\
        &=  ~~ \varphi \left( 
                        \raisebox{-4.7ex}{
                        \begin{tikzpicture}[scale=0.4]
                        \clip(0,0) rectangle (3.5,4);
                        \node (f) at (1,2) {$\langle \alpha, \beta \rangle$};
                        \node (e1) at (3,3) {$1$}; 
                        \node (e2) at (3,1) {$2$}; 
                        \draw
                        (e1) circle (11pt)
                        (e2) circle (11pt);
                        \draw[->, black, font=\normalsize]
                        (3,2.6) -- (3,1.4);
                        \end{tikzpicture}}
                        \otimes f \otimes g \right) \\
        &= \varphi \left( \delta _{\text{pair}} \left( 
                        \raisebox{-4.7ex}{
                        \begin{tikzpicture}[scale=0.4]
                        \clip(0,0) rectangle (3,4);
                        \node (e1) at (.5,3) {$1$}; 
                        \node (e2) at (.5,1) {$2$}; 
                        \node (d1) at (2.5,3.5) {$\alpha$}; 
                        \node (d2) at (2.5,0.5) {$\beta$}; 
                        \draw
                        (e1) circle (11pt)
                        (e2) circle (11pt);
                        \draw[-, dashed, black, font=\normalsize]
                        (e1) edge (d1)
                        (e2) edge (d2);
                        \end{tikzpicture}}\right)
                        \otimes f \otimes g \right) \\
    \end{align*}
    As $\varphi$ sends graphs containing any internal vertices to zero and both of $\delta_{\text{split}}$ and
    $\delta ^{\mathsf{z}_M}$ result in a graph with internal vertices, these differentials correspond to zero.
    The compatibility of $\varphi$ with the brackets follows then immediately from their 
    relation to $\delta_{\text{pair}}$ and $\Delta$.
    We may write this property as
    \begin{align*}
        \lbrace \Gamma \comp f_I, \Sigma \comp f_J \rbrace _{\mathcal{D}} = \lbrace \Gamma, \Sigma \rbrace _{G} \comp f _{IJ} 
    \end{align*}
\end{proof}

\subsection{Filtrations}
\label{sub:Filtrations}
In this section we will build on the constructions in the previous section to prove 
that the constructed morphism $\varphi$ is indeed a quasi-isomorphism.
This procedure will heavily rely on the following two Lemmas.
\begin{lmm}\cite[Mapping Lemma 5.2.4]{caw}
    \label{lmm:two}
    Let $f: \lbrace E^r _{pq} \rbrace \rightarrow \lbrace F^r _{pq} \rbrace$ be a 
    morphism of spectral sequences, that is, a family of maps $f^r _{pq} : E^r _{pq} \rightarrow F^r _{pq}$ satisfying 
    \begin{align*}
            d^r f^r = f ^{r+1} d^r \\
            f ^{r+1} = H(f^r)
\end{align*}
    such that moreover for some fixed $r$, $f^r : E^r _{pq} \simeq F^r _{pq} $
    is an isomorphism for all $p$ and $q$.
    The $5$-Lemma implies that $f^s : E^s _{pq} \simeq F^s _{pq} $ for all $s \geq r$ as well.
\end{lmm}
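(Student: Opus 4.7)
The plan is to proceed by induction on $s \geq r$, with the base case $s = r$ given by hypothesis. For the inductive step I will suppose $f^s_{pq}$ is an isomorphism in every bidegree and conclude the same for $f^{s+1}_{pq}$.

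The first step will be to exploit the compatibility $d^s f^s = f^s d^s$: combined with $f^s$ being a bijection in each bidegree, this forces $f^s$ to restrict to isomorphisms onto the corresponding subspaces of $d^s$-cycles $Z^s_{pq} = \ker d^s$ and $d^s$-boundaries $B^s_{pq} = \mathrm{im}\, d^s$ inside $F^s$. Injectivity is immediate since $f^s$ is itself injective, and surjectivity onto the relevant subspace is obtained by pulling a target element back through $f^s$ in the appropriate bidegree and using the commutation relation together with injectivity of $f^s$ one bidegree further.

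With the restricted isomorphisms on $B^s$ and $Z^s$ in hand, the final step will be to feed them into the 5-lemma applied to the commutative ladder obtained from the two short exact sequences
\[ 0 \to B^s_{pq} \to Z^s_{pq} \to E^{s+1}_{pq} \to 0 \]
and its analogue on the $F$ side, joined vertically by the restrictions of $f^s$ on the left and middle and by the induced quotient map on the right. Since the left and middle vertical arrows are isomorphisms, the 5-lemma forces the right one to be an isomorphism as well, and the hypothesis $f^{s+1} = H(f^s)$ identifies this induced quotient map with $f^{s+1}_{pq}$, closing the induction.

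The only point requiring genuine care --- rather than a real obstacle --- will be tracking the bidegree shifts introduced by $d^s$, which moves $(p,q)$ by $(\pm r, \mp (r-1))$ depending on homological versus cohomological convention, so that the short exact sequences and the vertical maps of the ladder are indexed consistently throughout the ladder. Beyond that, the result is a purely diagrammatic consequence of the 5-lemma once the cycle/boundary isomorphisms are established.
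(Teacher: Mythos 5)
Your argument is correct and is the standard 5-lemma proof that the statement itself alludes to; the paper does not prove this lemma but cites it from Weibel, so your induction on $s$ with the short-exact-sequence ladder over $0 \to B^s_{pq} \to Z^s_{pq} \to E^{s+1}_{pq} \to 0$ is exactly the intended argument. One small point worth flagging: the printed compatibility relation $d^r f^r = f^{r+1} d^r$ is a typo whose two sides do not even share a codomain; the correct relation, which you silently and correctly use, is $f^r d^r = d^r f^r$, and with it one can even bypass the 5-lemma entirely since a bidegree-wise isomorphism commuting with $d^s$ has an inverse that is again a chain map, whence $H(f^s)$ is automatically invertible.
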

\noindent
A special case of this Lemma is 
\begin{lmm}
    \label{lmm:one}
    Let $f : V \rightarrow W$ be a map of chain complexes $V$ and $W$ and equip both with 
    a bounded above and complete filtration $\mathcal{F} ^p$ compatible with $f$, that is,
    $f(\mathcal{F} ^pV) \subset \mathcal{F} ^pW$.
    Then $f : V \rightarrow W$ is a quasi-isomorphism if $\mathsf{gr} f : \mathsf{gr} V \rightarrow \mathsf{gr} W$ is one.
\end{lmm}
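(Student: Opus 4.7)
The plan is to deduce this statement directly from the Mapping Lemma \ref{lmm:two} by passing to the spectral sequences associated with the two filtered complexes. The hypotheses ``bounded above and complete'' are exactly what is needed for convergence, and the Mapping Lemma takes care of everything in between.

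First I would associate to each filtered complex its canonical spectral sequence. The $E^0$ page of the spectral sequence for $V$ is the associated graded complex $\mathsf{gr} V$ equipped with the induced differential, so the $E^1$ page is $H(\mathsf{gr} V)$, and analogously for $W$. Because $f(\mathcal{F}^p V) \subset \mathcal{F}^p W$, the map $f$ descends to a morphism $\mathsf{gr} f : \mathsf{gr} V \to \mathsf{gr} W$ compatible with the induced differentials, hence it induces a morphism of spectral sequences $f^r : E^r(V) \to E^r(W)$ in the sense of Lemma \ref{lmm:two}.

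Next, the assumption that $\mathsf{gr} f$ is a quasi-isomorphism is precisely the statement that $f^1$ is an isomorphism on every bidegree. By Lemma \ref{lmm:two}, $f^r$ is then an isomorphism for every $r \geq 1$, and in particular on the limit page $E^{\infty}$. Finally, I would invoke the strong convergence theorem for spectral sequences of bounded-above, complete filtrations: under these hypotheses each spectral sequence converges to $H(V)$ respectively $H(W)$ in the sense that $E^{\infty}_{pq} \cong \mathsf{gr}^p H_{p+q}$ of the induced filtration on cohomology. Since $f^{\infty}$ is an isomorphism and both induced filtrations on cohomology inherit bounded-above-and-complete status, an inductive five-lemma argument on the short exact sequences $0 \to \mathcal{F}^{p+1} \to \mathcal{F}^p \to \mathsf{gr}^p \to 0$ combined with passage to the inverse limit yields that $H(f)$ is an isomorphism.

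The only step that requires genuine care will be this last convergence-and-reconstruction argument. The Mapping Lemma input is essentially formal, but lifting an isomorphism on $E^{\infty}$ to an isomorphism on the total cohomology is where bounded-above-ness (to get degreewise stabilisation of the filtration quotients) and completeness (to avoid a $\varprojlim^1$ obstruction when reconstructing $H(V)$ from the tower $\{H(V)/\mathcal{F}^p H(V)\}$) are both used in an essential way. This is the main obstacle, and I would spell it out carefully at the end.
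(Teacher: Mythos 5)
Your proof is correct and takes exactly the route the paper implicitly relies on: the paper offers no argument of its own, stating only that Lemma \ref{lmm:one} is ``a special case'' of the Mapping Lemma \ref{lmm:two}, which is precisely the combination you spell out (pass to the canonical spectral sequences, observe that $\mathsf{gr} f$ being a quasi-isomorphism is the statement that $f^1$ is an isomorphism, invoke the Mapping Lemma to propagate this to all later pages, then use the complete convergence/comparison theorem for bounded-above complete filtrations to transfer the isomorphism from $E^{\infty}$ to the abutment). You are also right to single out the final step as the one carrying genuine mathematical content: the Mapping Lemma by itself only controls the pages, and the passage back to $H(V)\to H(W)$ is where bounded-above-ness (giving regularity, hence $RE^{\infty}=0$) and completeness (killing the $\varprojlim^1$ obstruction) are actually used; one should be a little careful to note that completeness of the filtration on $H(V)$ is a \emph{consequence} of the complete convergence theorem rather than something inherited for free, but this is exactly the kind of detail you flag as needing to be ``spelled out carefully,'' so the proposal is sound.
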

\noindent
The latter lemma can be applied to $\varphi : \mathcal{D} \rightarrow \mathcal{S}$ with a suitable filtration on both sides
because proposition (\ref{prop:dcptl}) implies that it is a chain map. 
By mimicking the construction from \cite{cw} we will construct different filtrations to
sort out certain differentials on both complexes and thereby simplify the verification of $\varphi$ being a quasi-isomorphism.\\
A generic element $T$ in the complex $\mathcal{D}$ consists of a graph $\Gamma \in \mathsf{
Graphs}_M$, elements $c^*$ and $l^*$ of $\mathsf{Com} ^*$ and $\mathsf{Lie} ^*$ 
respectively, as well as polynomials $f_1, \dotsc, f_r \in \mathcal{O}$
\begin{center}
\begin{tikzpicture}[line cap=round,line join=round,>=triangle 45,x=.5cm,y=.5cm]
\clip(0,0) rectangle (32,11);
\node (g) at (16,10) {$\Gamma$};
\node (c1) at (8,7) {$c^*$};
\node (l11) at (5,4) {$l^*$};
\node (f111) at (4,1) {$f_1$};
\node (f112) at (5,1) {$f_2$};
\node (l12) at (7,4) {$l^*$};
\node (f121) at (6,1) {$f_3$};
\node (f122) at (7,1) {$f_4$};
\node (f123) at (8,1) {$f_5$};
\node (l13) at (9,4) {$l^*$};
\node (f131) at (9,1) {$f_6$};
\node (l14) at (11,4) {$\dotsm$};
\node (f141) at (11,1) {$\dotsm$};
\node (f142) at (12,1) {$\dotsm$};
\node (c2) at (16,7) {$\dotsm$};
\node (l21) at (15,4) {$\dotsm$};
\node (l22) at (17,4) {$\dotsm$};
\node (c3) at (24,7) {$c^*$};
\node (l31) at (24,4) {$\dotsm$};
\node (l32) at (26,4) {$l^*$};
\node (f321) at (25,1) {$\dotsm$};
\node (f322) at (26,1) {$f_{r-1}$};
\node (f323) at (27,1) {$f_r$};
\draw[-, black, font=\normalsize]
(g) edge node[below] {$$} (c1)
(c1) edge node[below] {$$} (l11)
(l11) edge node[below] {$$} (f111)
(l11) edge node[below] {$$} (f112)
(c1) edge node[below] {$$} (l12)
(l12) edge node[below] {$$} (f121)
(l12) edge node[below] {$$} (f122)
(l12) edge node[below] {$$} (f123)
(l14) edge node[below] {$$} (f141)
(l14) edge node[below] {$$} (f142)
(c1) edge node[below] {$$} (l13)
(l13) edge node[below] {$$} (f131)
(c1) edge node[below] {$$} (l14)
(g) edge node[below] {$$} (c2)
(c2) edge node[below] {$$} (l21)
(c2) edge node[below] {$$} (l22)
(g) edge node[below] {$$} (c3)
(c3) edge node[below] {$$} (l31)
(c3) edge node[below] {$$} (l32)
(l32) edge node[below] {$$} (f321)
(l32) edge node[below] {$$} (f322)
(l32) edge node[below] {$$} (f323);
\end{tikzpicture}
\end{center}
Let $l$ be the number of $\mathcal{Lie}^*$ generators in this element and $\mathsf{deg} _{\mathcal{O}} = \sum _{i = 1 } ^{r} \mathsf{deg} _{\mathcal{O}} (f_i)$
the sum of the degrees of all attached polynomials.
This degree is well defined as $\mathsf{deg} _{\mathcal{O}} (f_i \cdot f_j) = \mathsf{deg} _{\mathcal{O}}(f_i) + \mathsf{deg} _{\mathcal{O}}(f_j)$.
Notice that on $\mathcal{D}$ the degree
\begin{align*}
        \mathsf{deg}^1 = r - l - \mathsf{deg} _{\mathcal{O}} 
\end{align*}
is raised by the differentials $d ^{\mathsf{lie}} _{\mathsf{mod}}$ and $d^{\mathsf{com}} _{\mathsf{alg}}$ by one and left unchanged by all other parts.\\
Hence on $\mathsf{gr} ^1 \mathcal{D}$, the graded complex associated to the filtration 
by this degree, the remaining differential is
\begin{align*}
    \delta _{\text{split}} + \delta _{\text{pair}} + \delta^{\mathsf{z}_M}
    + d ^{\mathsf{com}} _{\mathsf{mod}} + d ^{\mathsf{lie}} _{\mathsf{alg}}
\end{align*}
A polynomial $F \in \mathcal{S}$ can equivalently be represented as a graph $\Gamma$ 
with multiple connected components. Each one consists of a single vertex decorated with a cohomology class
and an element $v_i \in V$ and represents a variable $\alpha \otimes v_i$.
\begin{center}
\begin{tikzpicture}[scale=1]
\node (d1) at (0,3) {$\alpha$};
\node (d2) at (2,3) {$\beta$};
\node (d4) at (6,3) {$\gamma$};
\node (v1) at (0,2) {$\circ$};
\node (v2) at (2,2) {$\circ$};
\node (v3) at (4,2) {$\dotsm$};
\node (v4) at (6,2) {$\circ$};
\node (e1) at (0,1) {$v_1$};
\node (e2) at (2,1) {$v_2$};
\node (e4) at (6,1) {$v_k$};
\draw[-, black, font=\normalsize]
(0,1.95) -- (e1)
(2,1.95) -- (e2)
(6,1.95) -- (e4);
\draw[-,dotted, black, font=\normalsize]
(d1) -- (0,2.05)
(d2) -- (2,2.05)
(d4) -- (6,2.05);
\end{tikzpicture}
\end{center}
\noindent
Hence for the image of an element $T \in \mathcal{D}$ $\mathsf{deg} ^1$ vanishes and
we therefore construct the trivial filtration by the zero-degree, in which 
$\mathcal{D}$ is concentrated in degree zero.
Thereby we obtain that $\varphi$ is compatible with the filtrations and can reduce our
problem to $\mathsf{gr}^1 \varphi : \mathsf{gr}^1 \mathcal{D} \rightarrow \mathsf{gr}^1 \mathcal{S}$.
To move on, let us define another degree on $\mathcal{D}$ , which is the number of edges minus the 
number of vertices of the graph $\Gamma$.
\begin{align*}
        \mathsf{deg}^2 = e - v
\end{align*}
All differentials either increase of leave this degree constant, so we can filter the 
complex by $\mathsf{deg}^2$. On the associated graded complex $\mathsf{gr}^2 \mathsf{gr}^1 \mathcal{D}$
the $\delta _{\text{pair}}$ term in the induced differential vanishes completely.
For $\delta^{\mathsf{z}_M}$ the terms that contribute are the terms attaching components with $e$ edges and $s = e+1$ internal vertices.
On $\mathcal{S}$ we choose the corresponding degree to be minus the polynomial degree
\begin{align*}
        \widetilde{\mathsf{deg}}^2 = -\mathsf{deg}_{\mathcal{S}} 
\end{align*}
such that $\varphi$ is again compatible with these filtrations.
Notice that this eliminates the differential $\Delta$ on $\mathcal{S}$ and we are left 
with the zero differential on $ \mathsf{gr} ^2 \mathsf{gr} ^1 \mathcal{S}$.
All remaining parts of the differential on $\mathcal{D}$ exclusively either increase the number of internal 
vertices $s$ by at least one, decrease the number of $\text{Com} ^*$ elements, which 
we denote by $c$, by at least one or don't affect any of these degrees at all.
So it is well defined to filter by the degree
\begin{align*}
    \mathsf{deg} ^3 = s + \left( n - 1 \right) \cdot c - \mathsf{deg}
\end{align*}
where $\mathsf{deg}$ is the degree of $\Gamma$ defined in equation (\ref{eq:graphdeg}).
On the associated graded complex $\mathsf{gr} ^3 \mathsf{gr} ^2 \mathsf{gr} ^1 \mathcal{D}$
only the differentials $\delta _{\text{split}}$, $d^{\mathsf{com}}_{\mathsf{mod}} $ and $d^{\mathsf{lie}}_{\mathsf{alg}}$ remain unchanged. By 
construction only the terms of $\delta^{\mathsf{z}_M}$, which add exactly one edge and one internal vertex remain.
That is either the term replacing a decoration by an internal vertex with the same decoration or connecting an internal vertex to an edge.
In the latter case the vertex can be considered to be decorated with the unit element $\mathbb{I} \in \overline{H(M)}$.
Both terms appear as well in $\delta _{\text{split}} $ with the opposite sign and therefore cancel out.
Because an element of $\mathcal{S}$, as illustrated above, has neither 
edges nor internal vertices or $\mathsf{Com}^*$ elements, we can once again use the 
trivial filtration on $\mathcal{S}$ to achieve compatibility of $\varphi$.
\\
Lastly we use the filtration by the degree $\mathsf{deg} ^4 = -c$ to construct on
$\mathcal{D}$ the naturally determined spectral sequence $E _{pq}$, starting with $E^0 _{pq} = \QR{\mathcal{F} ^p \mathcal{D}  _{p+q} }{\mathcal{F} ^{p-1} \mathcal{D}  _{p+q} }$.
This splits the differential in the following way
\begin{align*}
        d = \underbrace{\delta_{\text{split}} + d^{\mathsf{lie}}_{\mathsf{alg}} }_{=d^0} + \underbrace{d^{\mathsf{com}}_{\mathsf{mod}}}_{=d^1}
\end{align*}
One easily checks that for the trivial spectral sequence $F _{pq}$ in which
$\mathcal{S}$ is concentrated in one column $p=0$ we obtain that the collection of 
maps $\varphi^0 _{pq} : E^0 _{pq} \rightarrow F^0 _{pq}$
is a morphism between the $0^{th}$ page of these spectral sequences.
To compute the first page $E^1 _{pq}$ of this spectral sequence, notice 
that the terms of the differential on the zeroth page act in the following way.
\begin{center}
\begin{tikzpicture}[scale=0.9]
    \node (k0) at (0,0) {$\mathsf{Graphs} _M$};
    \node (k1) at (1,0) {$\comp $};
    \node (k2) at (2,0) {$\mathsf{Com}^* \left[ n \right] $};
    \node (k3) at (3,0) {$\comp $};
    \node (k4) at (3.9,0) {$\mathsf{Lie}^* \left[ 1 \right] $};
    \node (k5) at (4.8,0) {$\comp $};
    \node (k6) at (5.5,0) {$\mathsf{Com}$};
    \node (k7) at (6.2,0) {$\comp $};
    \node (k8) at (6.6,0) {$V$};
    \draw[->, black, font=\normalsize]
    (k0) edge[loop above, out=120, in=60, looseness=8] node[above] {$\delta_{\text{split}}$} (k0)
    (k4) edge[bend left, out=80, in=100, looseness=1.4] node[above] {$d^{\mathsf{lie}}_{\mathsf{alg}} $} (k6);
\end{tikzpicture}
\end{center}
In this case we can use the commutativity of the operadic composition and the homology functor $H _{\ast}$.
\begin{align}
\begin{split}
    \label{eq:fp}
        &H _{\ast}  \left( \mathsf{Graphs} _M \comp \mathsf{Com}^* \left[ n \right]  \comp \mathsf{Lie}^* \left[ 1 \right]  \comp \mathsf{Com} \comp V, \delta _{\text{split}} + d ^{\mathsf{lie}} _{\mathsf{alg}} \right) \\
= &H _{\ast}  \left( \mathsf{Graphs} _M , \delta _{\text{split}} \right) \comp \mathsf{Com}^* \left[ n \right] \comp \underbrace{H \left( \mathsf{Lie}^* \left[ 1 \right]  \comp \mathsf{Com} \comp V, d ^{\mathsf{lie}} _{\mathsf{alg}} \right)}_{=V} \\
\end{split}
\end{align}
The latter term of the homology can be directly simplified, as this is the Harrison complex 
of $\mathcal{O} = \mathsf{Com} \comp V$, which is acyclic.
For the other term we need to introduce more tools and construct a nested spectral sequence.

\subsection{The Homology of $\mathsf{Graphs}_M$}
\label{sub:The Homology of GraphsM}
In order to compute the homology of $\left( \mathsf{Graphs}_M, \delta _{\text{split}}\right)$,
we introduce the following cdga model which was similarly presented by Lambrechts and Stanley in \cite{ls}.
While the relations (1)-(3) where already discovered by Cohen, this model was first 
found independently by Kriz \cite{kr}  and Totaro \cite{to} in different ways.
A Poincar\'e duality algebra $A$ of formal dimension $n$ is a graded commutative $\bf{k}$-algebra over a field $\bf{k}$ with a non degenerated pairing $A ^{k} \otimes A ^{n-k} \rightarrow \bf{k}$.
Choose a basis $\lbrace \alpha ^j \rbrace _{j = 1} ^N$ with its Poincar\'e dual basis $\lbrace \alpha ^{*j}  \rbrace _{j = 1} ^N$.
\begin{ddef}
    \label{ddef:ls}
    \hfill
    \begin{enumerate}
    \item 
    For a Poincar\'e duality algebra $A$ of formal dimension $n$, the  k-configuration 
            model $\left( \mathcal{F}^* _{A}(k), d^* _{\mathcal{F}} \right) $ is the free graded commutative differential graded algebra generated by
    \begin{equation*}
    \begin{split}
        \label{eq:}
            &\omega_{ab} \qquad 1 \leq a \neq b \leq r  \qquad \text{ of degree } (n-1) \\
            &\alpha _a ^i \qquad \lbrace \alpha^i \rbrace \text{ basis of } A
    \end{split}
    \end{equation*}
    subject to the relations
    \begin{align*}
        &\text{(i) skew symmetry}            & &\omega _{ab}  = (-1) ^{n} \omega _{ba} \\
        &\text{(ii) Arnold identity }        & &\omega _{ab} \omega _{bc} + \omega _{bc} \omega _{ca} + \omega _{ca} \omega _{ab} = 0 \\
        &\text{(iii) move decorations}       & &\omega _{ab} \alpha _a^p = \omega _{ab} \alpha_b^p \\
        &\text{(iv) multiply decorations }   & &\alpha^i_a \alpha^j_a = \left( \alpha^i \cdot \alpha^j \right) _a
    \end{align*}
    with the differential $d _{\mathcal{F}}^*~\omega _{ab} = \sum _{j = 1} ^N \alpha_a ^{j} \alpha_b ^{j*} $.
            \vspace{4pt}
    \item 
        The cdga's $\mathcal{F}^* _{A}(k)$ assmble together to the $\mathsf{e}_n ^{\vee} $-module $\mathcal{F}^* _{A}$.
    \end{enumerate}
\end{ddef}
\noindent
Using the paper \cite{ds} by Sinha, we will now shed some light on this construction.
Disregarding for a moment the generators $\alpha ^j$ and the relations $(iii)$ and $(iv)$, we obtain the complex
\begin{align*}
    \mathsf{Siop} ^n = \QR{~^* \mathsf{Gra} ^n}{\Big< \text{Skew} + \text{Arnold} \Big>}
\end{align*}
where an element $\omega _{ab}$ represents an edge between the vertices $a$ and $b$.
These relations are graphically represented by
\begin{align*}
    \raisebox{-4.7ex}{
    \begin{tikzpicture}[scale=0.4]
        \clip(0,0) rectangle (4,4);
        \node (v1) at (.5,2) {$1$}; 
        \node (v2) at (2.5,2) {$2$}; 
        \draw
        (v1) circle (11pt)
        (v2) circle (11pt);
        \draw[->] (.9,2) -- (2.1,2);
    \end{tikzpicture}}
    + (-1)^n~~
    \raisebox{-4.7ex}{
    \begin{tikzpicture}[scale=0.4]
        \clip(0,0) rectangle (4,4);
        \node (v1) at (.5,2) {$1$}; 
        \node (v2) at (2.5,2) {$2$}; 
        \draw
        (v1) circle (11pt)
        (v2) circle (11pt);
        \draw[->] (2.1,2) -- (.9,2);
    \end{tikzpicture}} = 0
    \qquad \qquad \qquad 
    \raisebox{-3.0ex}{
    \begin{tikzpicture}[scale=0.4]
        \clip(0,0) rectangle (4,4);
        \node (v1) at (.5,0.5) {$1$}; 
        \node (v2) at (1.5,2.23) {$2$}; 
        \node (v3) at (2.5,.5) {$3$}; 
        \draw
        (v1) circle (11pt)
        (v2) circle (11pt)
        (v3) circle (11pt);
        \draw[->] (.7,.85) -- (1.3,1.89); 
        \draw[->] (1.7,1.89) -- (2.31,.84); 
    \end{tikzpicture}}
    +
    \raisebox{-3.0ex}{
    \begin{tikzpicture}[scale=0.4]
        \clip(0,0) rectangle (4,4);
        \node (v1) at (.5,0.5) {$1$}; 
        \node (v2) at (1.5,2.23) {$2$}; 
        \node (v3) at (2.5,.5) {$3$}; 
        \draw
        (v1) circle (11pt)
        (v2) circle (11pt)
        (v3) circle (11pt);
        \draw[->] (.7,.85) -- (1.3,1.89); 
        \draw[->] (2.1,.5) -- (.9,.5); 
    \end{tikzpicture}}
    +
    \raisebox{-3.0ex}{
    \begin{tikzpicture}[scale=0.4]
        \clip(0,0) rectangle (4,4);
        \node (v1) at (.5,0.5) {$1$}; 
        \node (v2) at (1.5,2.23) {$2$}; 
        \node (v3) at (2.5,.5) {$3$}; 
        \draw
        (v1) circle (11pt)
        (v2) circle (11pt)
        (v3) circle (11pt);
        \draw[->] (1.7,1.89) -- (2.31,.84); 
        \draw[->] (2.1,.5) -- (.9,.5); 
    \end{tikzpicture}}
    = 0
\end{align*}
Furthermore, we define
\begin{align*}
        \mathsf{Pois} ^n = \QR{\text{n-forests}}{\Big<\text{Skew} + \text{Jacobi} \Big>}
\end{align*}
the quotient of the free module spanned by n-forests, that is, a collection of trees 
with univalent or trivalent vertices whose set of leaves is in bijection with the 
integers $\lbrace 1, \dotsc, n\rbrace $, by skew-symmetry and the Jacobi identity.
Here every branch point stands for the Lie bracket evaluated on its leaves.
$\mathsf{Pois}^n$ has a basis consisting of n-forests with tall trees (the distance between the 
leaf with the minimal label and the root in each tree is maximal) and $\mathsf{Siop}^n$ one 
consisting of long graphs (each component is a linear graph starting with the minimal label).
Consider the map $\beta _{\Gamma, F} : E _{\Gamma} \rightarrow I _{F}$ from the set of edges 
of an n-graph $\Gamma$ to the set of internal bifurcations of the trees in an n-forest $F$ that sends 
an edge directed from $i$ to $j$ to the lowest branch point on the shortest path from 
the $i^{th} $ to the $j^{th} $ leaf (if these leaves are attached to the same tree).
With this map one can introduce a pairing between n-graphs and n-forests
\begin{align*}
        \langle \Gamma, F \rangle = \begin{cases}
            \pm 1 \qquad &\text{ if } \beta _{\Gamma, F} \text{ is a bijection } \\
            0 &\text{otherwise} 
        \end{cases}
\end{align*}
where the sign depends on the ordering of the leaves on the tree. This pairing passes 
on to a perfect pairing between the quotients Pois $^n$ and Siop $^{n}$. Hence the induced map
$ \tilde \beta : \mathsf{Pois} ^n \xrightarrow{\sim} \left( \mathsf{Siop} ^n \right) ^*$
is an isomorphism and we conclude from $\mathsf{Pois}^n = \mathsf{Com} \comp \mathsf{Lie} \left[ n-1 \right] $ 
that $\mathsf{Siop}^n = \mathsf{Com} ^{*}\left[ n \right]  \comp \mathsf{Lie} ^{*} \left[ 1 \right] $.
An element $\alpha_a$ decorates the vertex $a$ with the cohomology class 
$\alpha$. Due to relation $(iii)$ one can move a decoration between vertices 
connected by an edge and by relation $(iv)$ multiply two such decorations attached to the same vertex.
Taking the Poincar\'e duality algebra to be $A = H_*(M)$, this implies that an element 
of $\mathcal{F}^*_{H_*(M)}$ represents a graph 
without internal vertices, decorated with cohomology classes at each connected 
component.
However, we will work with the dual model $\mathcal{F} _{H^*(M)} = \left( \mathcal{F}^* _{H_*(M)} \right)^*$ which is the
complex of decorated n-forests, so we obtain the $\mathsf{e}_n$-module
\begin{align*}
        \mathcal{F} _{H^*(M)} = \mathsf{Com} \comp H^* (M) \comp \mathsf{Lie} \left[ n-1 \right] 
\end{align*}
with the usual composition of trees and the dual differential $d_{\mathcal{F}} = (d_{\mathcal{F}}^*)^*$.
\begin{lmm}
    \label{lmm:rec}
    The model $\mathcal{F} _{H^*(M)}$ satisfies the recursive identity
    \begin{align*}
           \mathcal{F} _{H^*(M)}(k)  = \mathcal{F} _{H^*(M)}(k-1)  \otimes H _{\ast} (M) \oplus \mathcal{F} _{H^*(M)}(k-1) \left[n-1\right]^{\oplus(k-1)} 
    \end{align*}
\end{lmm}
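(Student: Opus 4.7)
The plan is to prove the identity by decomposing $\mathcal{F}_{H^*(M)}(k)$ according to the tree component containing the leaf labeled $k$. From the operadic composition $\mathcal{F}_{H^*(M)} = \mathsf{Com} \comp H^*(M) \comp \mathsf{Lie}[n-1]$, one reads off
\[
\mathcal{F}_{H^*(M)}(k) = \bigoplus_{\pi} \bigotimes_{B \in \pi} H^*(M) \otimes \mathsf{Lie}[n-1](B),
\]
where $\pi$ ranges over set partitions of $\{1,\dots,k\}$. I would split this sum into two cases depending on whether the block $B_k$ of $\pi$ containing the leaf $k$ is the singleton $\{k\}$ or a larger block.

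In the singleton case, the block contributes $H^*(M) \otimes \mathsf{Lie}[n-1](1) \cong H^*(M)$, while the other blocks assemble into an element of $\mathcal{F}_{H^*(M)}(k-1)$. Under the Poincar\'e duality identification $H^*(M) \cong H_*(M)$ implicit in the Lambrechts-Stanley construction (where decorations in $\mathcal{F}^*_{H_*(M)}$ and its dual $\mathcal{F}_{H^*(M)}$ are exchanged via the pairing), this case produces exactly the first summand $\mathcal{F}_{H^*(M)}(k-1) \otimes H_*(M)$.

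In the non-singleton case $B_k = B' \sqcup \{k\}$ with $B' \neq \emptyset$, the key step is the natural isomorphism
\[
\mathsf{Lie}[n-1](m) \;\cong\; \bigoplus_{i=1}^{m-1} \mathsf{Lie}[n-1](m-1)[n-1]
\]
for each $m \ge 2$, obtained by rewriting each Lie tree on $B_k$ in a normal form in which the outermost bracket pairs $k$ with a uniquely determined leaf $i \in B'$; eliminating this outermost bracket accounts for the degree shift $[n-1]$ and reduces the tree to one with $m-1$ leaves. Summing over $i \in \{1, \dots, k-1\}$ and reassembling with the remainder of the partition produces $\mathcal{F}_{H^*(M)}(k-1)[n-1]^{\oplus(k-1)}$, matching the second summand. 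The main obstacle is making the normal-form decomposition of $\mathsf{Lie}[n-1](m)$ rigorous, since the Jacobi and antisymmetry relations make many superficially distinct bracketings represent the same element; I would use an explicit basis such as the right-combed (Dynkin-style) basis $\{[[\dots[x_{\sigma(1)}, x_{\sigma(2)}], \dots], x_k] : \sigma \in \Sigma_{m-1}\}$ of the free Lie algebra to pin down the canonical choice of $i$.
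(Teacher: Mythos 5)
The paper proves this lemma dually, working entirely on the graph side: it decomposes $U(r) = \mathcal{F}^*_{H_*(M)}(r)$ by the valence of vertex $1$ (counting decorations), $U(r)=U_0\oplus U_1\oplus U_2$, and then uses Sinha's ``long graph'' basis of $\mathsf{Siop}^n$ to argue that vertex $1$ can have valence at most $2$ with at most one decoration, with relation (iii) of Definition~\ref{ddef:ls} collapsing the $U_2$ contribution into $U_1$. You instead work on the $\mathsf{Pois}^n$ / tree side, decomposing a decorated forest according to the block of the set partition containing the leaf $k$, and reducing to a structural statement about $\mathsf{Lie}(m)$. That is a legitimately different route, dual to the paper's, and in principle either side can carry the argument. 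Your treatment of the $H^*(M)$ versus $H_*(M)$ labelling via Poincar\'e duality is imprecise but matches the paper's own conflation.

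The concrete gap is in your Lie-algebra normal form. The proposed set
$\bigl\{[[\dotsb[x_{\sigma(1)},x_{\sigma(2)}],\dotsc],x_k] : \sigma\in\Sigma_{m-1}\bigr\}$
is \emph{not} a basis of $\mathsf{Lie}(m)$: already for $m=3$, $[[x_1,x_2],x_3]$ and $[[x_2,x_1],x_3]$ differ only by a sign, so the set has $(m-1)!$ elements but fewer than $(m-1)!$ independent ones. Relatedly, your description ``the outermost bracket pairs $k$ with a uniquely determined leaf $i$'' is wrong for this (left-combed) normal form, in which the outermost bracket pairs $x_k$ with an entire $(m-1)$-leaf subtree, not a single leaf. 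The fix is to use the right-comb basis with $x_k$ at the \emph{innermost} position,
$\bigl\{[x_{\sigma(1)},[x_{\sigma(2)},[\dotsc,[x_{\sigma(m-1)},x_k]]\dotsb]] : \sigma\in\Sigma_{m-1}\bigr\}$,
which realizes $\mathsf{Lie}(m)\big|_{\Sigma_{m-1}}$ as the regular representation of $\Sigma_{m-1}$; here the \emph{innermost} bracket pairs $x_k$ with the unique leaf $i=\sigma(m-1)$, and collapsing that edge yields the desired $\bigoplus_{i=1}^{m-1}\mathsf{Lie}(m-1)[n-1]$. Once that is repaired your assembly of the two cases into $\mathcal{F}_{H^*(M)}(k-1)\otimes H_*(M)\oplus\mathcal{F}_{H^*(M)}(k-1)[n-1]^{\oplus(k-1)}$ goes through, but note that the number of choices of $i$ is $|B'|$, which depends on the block; you should phrase the second case directly as ``choose $i\in\{1,\dotsc,k-1\}$, then attach $k$ to $i$,'' so that the $k-1$ summands are visible before summing over partitions.
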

\begin{proof}
    We will show this property in the dual setting, which will imply the claim about $\mathcal{F} _{H^*(M)} $.
    One can split $U(r) := \mathcal{F}^* _{H_*(M)} (r) $ into a direct sum
    \begin{align}
    \begin{split}
        \label{eq:split}
            U(r) = U_0(r) \oplus U_1(r) \oplus U_2(r)
    \end{split}
\end{align}
where $U_i$ contains all graphs in which the vertex $1$ has degree $i$, counting 
decorations. For the case of an isolated vertex, it can be easily seen that $U_0(r) \simeq U(r-1)$.
If we allow vertex $1$ to have one edge, this can either be a decoration, then we obtain a term isomorphic to
$U(r-1) \otimes \overline{H}_* (M)$, or an edge to the remaining part of the graph, in which case the term is
isomorphic to $U(r-1)\left[ n-1 \right] ^{\oplus(r-1)}$. In the basis of long graphs the first vertex can have at most valence 2
of which one is due to a decoration. However this case is equivalent to $U_1$, since one can move the decoration
with relation $(iii)$ in definition (\ref{ddef:ls}) along a connected component.
In total this amounts to
\begin{align*}
        U(r) &= U(r-1) \oplus U(r-1) \otimes \overline{H}_*(M) \oplus U(r-1)\left[ n-1 \right]  ^{\oplus (r-1)} \\
        &= U(r-1) \otimes H_* (M) \oplus U(r-1)\left[ n-1 \right]  ^{\oplus (r-1)} 
\end{align*}
\end{proof}
\noindent
Now we are able to conclude the desired result of this subsection, which is to express the homology of $\mathsf{Graphs}_M$ up to quasi-isomorphism by the model $\mathcal{F} _{H^*(M)}$.
Since its differential $d_{\mathcal{F}}$ would naturally correspond to $\delta _{\text{pair}}$
which however vanished through the second filtration, we consider $\mathcal{F} _{H^*(M)}$ without any differential.
\begin{prop}
    \label{prop:quasi}
    The map
    \begin{align*}
            \mathcal{F} _{H^*(M)} \xrightarrow{\sim} H_{\ast} \left( \mathsf{Graphs}_M , \delta_{\text{split}} \right)  
    \end{align*}
    is an isomorphism of graded $\mathsf{e}_n$-modules.
\end{prop}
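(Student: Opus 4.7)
The plan is to define an explicit map of $\mathsf{e}_n$-modules
$\Phi : \mathcal{F}_{H^*(M)} \to H_*(\mathsf{Graphs}_M, \delta_{\text{split}})$
and then verify it is an isomorphism by combining an acyclicity statement for the internal-vertex subcomplex with the recursive identity of Lemma \ref{lmm:rec}. To define $\Phi$, take a decorated $n$-forest in $\mathcal{F}_{H^*(M)}(r) = \mathsf{Com} \circ H^*(M) \circ \mathsf{Lie}[n-1]$ and realize each branch point as an edge between the corresponding external vertices (using the inclusion $\mathsf{Lie}[n-1] \hookrightarrow \mathsf{Graphs}_n$ via the generator $(\circ\text{-}\circ)$), each cohomology decoration as a decoration attached to the same vertex, and the commutative product as disjoint union. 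The resulting graphs contain no internal vertices and are therefore automatically $\delta_{\text{split}}$-closed; the relations (iii) and (iv) of Definition \ref{ddef:ls} hold in $\mathsf{Graphs}_M$ by the symmetry properties of decorations on a connected component. Compatibility with the $\mathsf{e}_n$-action is tautological since $\Phi$ is built from the generators $(\circ\ \circ)$ and $(\circ\text{-}\circ)$ of $\mathsf{e}_n$.

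The second step, which is the main obstacle, is to show that the subcomplex $\mathsf{Graphs}_M^{\text{int}} \subset \mathsf{Graphs}_M$ spanned by graphs carrying at least one internal vertex is acyclic with respect to $\delta_{\text{split}}$. I expect this to follow from a contracting homotopy in the style of Campos--Willwacher \cite{cw}: filter by (for instance) the number of edges incident to internal vertices, and on each associated graded piece exhibit $\delta_{\text{split}}$ as a Koszul-type differential that pairs each configuration with an internal-vertex insertion, so that the kernel of the projection $\mathsf{Graphs}_M \twoheadrightarrow \mathsf{Graphs}_M/\mathsf{Graphs}_M^{\text{int}}$ becomes contractible. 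Once this is established, every homology class is represented by a graph on external vertices only, whose relations are exactly skew-symmetry, Arnold, and the decoration relations (iii), (iv) of Definition \ref{ddef:ls}.

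Given acyclicity, the identification with $\mathcal{F}_{H^*(M)}$ is formal: the non-internal part of $\mathsf{Graphs}_M$ is the $H^*(M)$-decorated version of $\mathsf{Siop}^n$, and by the perfect pairing $\tilde\beta$ recalled before Lemma \ref{lmm:rec} its dual is precisely $\mathsf{Com} \circ H^*(M) \circ \mathsf{Lie}[n-1] = \mathcal{F}_{H^*(M)}$. As a cross-check of bijectivity in each arity, I would run an induction on $r$ using Lemma \ref{lmm:rec}, splitting after homology according to whether vertex $1$ is isolated (contributing $\mathcal{F}_{H^*(M)}(r-1) \otimes H_*(M)$) or joined by a single edge to one of the other $r-1$ external vertices (contributing $\mathcal{F}_{H^*(M)}(r-1)[n-1]^{\oplus(r-1)}$), matching the recursive identity. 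The genuinely hard input is the acyclicity of $\mathsf{Graphs}_M^{\text{int}}$; everything else is bookkeeping in the Lambrechts--Stanley model combined with naturality of the $\mathsf{e}_n$-action.
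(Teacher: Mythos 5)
Your map $\Phi$ coincides with the paper's embedding (\ref{eq:inc}), and the bookkeeping around $\mathsf{Siop}^n$, $\mathsf{Pois}^n$ and the perfect pairing $\tilde\beta$ is fine. The problem is the claim you flag as the main input, the $\delta_{\text{split}}$-acyclicity of the subcomplex $\mathsf{Graphs}_M^{\text{int}}$ spanned by graphs with at least one internal vertex. That claim is false, and moreover, even if it were true it would prove the \emph{wrong} statement. Because $\delta_{\text{split}}$ strictly increases the number of internal vertices, the quotient $\mathsf{Graphs}_M / \mathsf{Graphs}_M^{\text{int}}$ carries the zero differential and equals the full $H^*(M)$-decorated $\mathsf{Gra}_n$. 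If the internal subcomplex were acyclic, the associated long exact sequence would force $H_*(\mathsf{Graphs}_M,\delta_{\text{split}})$ to be that entire quotient; but the decorated $\mathsf{Gra}_n$ does not satisfy Arnold or the decoration relations (iii), (iv), and it is strictly larger than $\mathcal{F}_{H^*(M)}$ (for instance, for $n$ odd the double edge $s_{12}^2$ is a nonzero class in the $0$-internal-vertex part of arity $2$, while $\mathcal{F}_{H^*(M)}(2)$ is only as large as the decorated $\mathsf{Pois}^n(2)$). Your two assertions --- that $\mathsf{Graphs}_M^{\text{int}}$ is acyclic, and that the homology is graphs on external vertices \emph{modulo} Arnold, (iii), (iv) --- therefore contradict each other, and the second, which is what you actually need, does not follow from the first. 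Relatedly, the parenthetical ``the relations (iii) and (iv) hold in $\mathsf{Graphs}_M$'' is not true: moving a decoration along an edge is not an identity in $\mathsf{Graphs}_M$, only a relation that can emerge in homology, and only because $\mathsf{Graphs}_M^{\text{int}}$ is \emph{not} acyclic.

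What you call a ``cross-check'' at the end is essentially the paper's actual argument, and it should be promoted to the proof. The paper fixes arity $r$, splits $\mathsf{Graphs}_M(r) = C_0\oplus C_1\oplus C_{\geq 2}$ according to the valence (counting decorations) of the external vertex $1$, filters so that the zeroth page differential is the piece $\delta_{\text{split}}^0 : C_{\geq 2}\to C_1$, observes that this piece is injective, and identifies the resulting $E^1$ page with $C(r-1)\otimes H^*(M)\oplus C(r-1)[n-1]^{\oplus(r-1)}$. The spectral sequence collapses at $E^2$, giving exactly the recursive identity of Lemma \ref{lmm:rec} for $H_*(\mathsf{Graphs}_M,\delta_{\text{split}})$, and induction on $r$ closes the argument. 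To repair your writeup, you should drop the acyclicity claim entirely, prove injectivity of the vertex-$1$ splitting map on $C_{\geq 2}$, and carry out the induction as the main argument rather than a sanity check.
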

\begin{proof}
    The embedding $\mathcal{F} _{H^*(M)} \hookrightarrow \mathsf{Graphs}_M$ defined on the generators by
    \begin{align}
    \begin{split}
        \label{eq:inc}
        \lbrace ~,~\rbrace~ &\mapsto ~~~\text{$\circ$-$\circ$} \\
        \wedge~~ &\mapsto~~~ \text{$\circ$~$\circ$} 
    \end{split}
    \end{align}
    is a morphism of operadic $\mathsf{e}_n$-modules.
    We will prove the claim by induction on the arity $r$.  The result holds trivially 
    for the case $\mathcal{F} _{H^*(M)}(0) \simeq H_*(\mathsf{Graphs}_M(0), \delta_{\text{split}})$.
    Split the complex $\mathsf{Graphs}_M (r)$ into $C(r) = C_0 \oplus C_1 \oplus C_{\geq 2}$
    in the same way as $\mathcal{F}^* _{H_*(M)}$ was split in equation (\ref{eq:split}).
    For $C_0$ we observe $C_0 \simeq C(n-1)$.
    The differential $\delta _{\text{split}}$ on $\mathsf{Graphs}_M$ has the following components.
    \begin{center}
    \begin{tikzpicture}[scale=1]
    \label{tikz:diff}
        \node (k0) at (0,0) {$C_0$};
        \node (p1) at (1,0) {$\oplus$};
        \node (k1) at (2,0) {$C_1$};
        \node (p2) at (3,0) {$\oplus$};
        \node (k2) at (4,0) {$C _{\geq 2} $};
        \draw[->, black, font=\normalsize]
        (k0) edge[loop above, out=120, in=60, looseness=8] node[above] {$$} (k0)
        (k1) edge[loop above, out=120, in=60, looseness=8] node[above] {$$} (k1)
        (k2) edge[loop above, out=120, in=60, looseness=8] node[above] {$$} (k2)
        (k2) edge[bend left, out=60, in=120, looseness=1] node[below] {$\delta_{\text{split}} ^0$} (k1);
    \end{tikzpicture}
    \end{center}
    With the descending filtration $\mathcal{F}^pC(r) = \lbrace \Gamma \in C _{\geq 2}
    ~\vert~ \mathsf{deg} (\Gamma) \geq k \rbrace \oplus \lbrace \Gamma \in C _1 ~\vert~ \mathsf{deg} (\Gamma) \geq k+1 \rbrace$
    on $C(r)$ we set up a spectral sequence in which the part of the differential on the 0$^{th}$ page
    is $\delta _{\text{split}} ^0$.
    Since this differential is injective on $C_{\geq 2}$, the kernel is exactly $\text{ker}(\delta _{\text{split}} ^0) = C_1$.
    The next page of the spectral sequence is therfore
    \begin{align*}
        E^1 (C(r)) = H(\mathsf{gr} C(r)) = C_0 \oplus \frac{\text{ker}(\delta _{\text{split}} ^0)}{\text{im}(\delta _{\text{split}} ^0)}
        = \frac{C_1}{\text{im}(\delta _{\text{split}} ^0)} = \text{coker}(\delta _{\text{split}}^0)
    \end{align*}
    Only two kinds of graphs can appear in this quotient, namely a graph with
    \begin{enumerate}
        \item[(a)] an isolated vertex 1 that is decorated
        \item[(b)] a vertex 1 connected to another vertex that is not decorated
    \end{enumerate}
    By the same argument as in lemma (\ref{lmm:rec}) we obtain for both cases one term in the following sum
    \begin{align*}
            E^1 (C(r)) 
            &= C_0 \oplus C(r-1) \otimes \Bar{H}^* (M) \oplus C(r-1) [n-1] ^{\oplus (r-1)} \\
            &= C(r-1) \otimes H^* (M) \oplus C(r-1) [n-1] ^{\oplus (r-1)} \\
    \end{align*}
    The differential on this page concides with the differential on $C(r-1)$.
    This spectral sequence is a double complex concentrated on a double coloumn and therefore it collapses on the second page.
    Hence we obtain the recursive identity 
    \begin{align*}
            H(C(r)) = H(C(r-1)) \otimes H^* (M) \oplus H(C(r-1)) [n-1] ^{\oplus (r-1)}
    \end{align*}
    which is the same as for $\mathcal{F} _{H_*(M)}$.
    By the induction hypothesis this concludes the proof.
\end{proof}
\noindent
With this result we continue the computation from equation (\ref{eq:fp}) and obtain on the first page.
\begin{align}
    \label{eq:fp2}
    \begin{split}
        E^1 
        &= H_{\ast} \left( \mathsf{Graphs}_M , \delta_{\text{split}} \right) \comp \mathsf{Com}^* \left[ n \right] \comp V\\
        &\simeq \mathcal{F} _{H^*(M)} \comp \mathsf{Com}^* \left[ n \right] \comp V\\
        &= \mathsf{Com} \comp H^*(M) \comp \mathsf{Lie} \left[ n-1 \right] \comp \mathsf{Com}^* \left[ n \right] \comp V\\
    \end{split}
\end{align}
The differential on this page is $d^1 = d^{\mathsf{com}}_{\mathsf{mod}}$.
To proceed we verify that $d^{\mathsf{com}}_{\mathsf{mod}}$ corresponds to a canonically induced twisting differential $d _{\kappa}$.
\begin{lmm}
\label{lmm:isomod}
The isomorphism from lemma (\ref{prop:quasi}) induces an isomorphism of differential graded $\mathsf{e}_n$-modules
\begin{align*}
    \left(\mathcal{F} _{H_*(M)}  \comp \mathsf{Com} ^* \left[ n \right]  \comp V, d _{\kappa}\right)
    \xrightarrow{\sim} \left( H _{\ast} (\mathsf{Graphs} _M, \delta _{split} ) \comp \mathsf{Com} ^* \left[ n \right]  \comp V, d^{\mathsf{com}}_{\mathsf{mod}} \right)
\end{align*}
\end{lmm}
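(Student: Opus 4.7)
The plan is to recognize both differentials as instances of the same general construction: given a right $\mathsf{e}_n$-module $R$, the Koszul twisting morphism $\kappa : \mathsf{Com}^*[n] \to \mathsf{e}_n$ produces a canonical differential on $R \comp \mathsf{Com}^*[n] \comp V$ by converting a $\mathsf{Com}^*$-cogenerator into the corresponding commutative product in $\mathsf{e}_n$ and then letting it act on $R$ via the right module structure. Once this observation is formalized, the fact that Proposition \ref{prop:quasi} provides an isomorphism of \emph{$\mathsf{e}_n$-modules}, not merely of underlying graded vector spaces, will force the two differentials to intertwine by naturality of this construction.

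First I would spell out the general twisted composition product explicitly. For $r \in R$, a cogenerator $c \in \mathsf{Com}^*[n](k)$, and $v_1, \dotsc, v_k \in V$, the differential sends $r \otimes c \otimes (v_1 \otimes \dotsm \otimes v_k)$ to $r \cdot \kappa(c) \otimes (v_1 \otimes \dotsm \otimes v_k)$, where $\kappa(c) \in \mathsf{e}_n(k)$ is the $k$-ary commutative product and the dot denotes the right $\mathsf{e}_n$-action. Specializing this to $R = \mathsf{Graphs}_M$, whose commutative product is realized by the graph insertion $(\circ~\circ)$, recovers $d^{\mathsf{com}}_{\mathsf{mod}}$; specializing to $R = \mathcal{F}_{H^*(M)} = \mathsf{Com} \comp H^*(M) \comp \mathsf{Lie}[n-1]$, whose commutative product sits in the leading $\mathsf{Com}$-factor, recovers $d_\kappa$ directly from its definition.

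Next I would invoke the map \eqref{eq:inc} from Proposition \ref{prop:quasi}, which sends the commutative generator $\wedge$ to $(\circ~\circ)$ and the Lie generator $\lbrace~,~\rbrace$ to $(\circ\text{-}\circ)$, so it intertwines the full $\mathsf{e}_n$-module structures. Applying the functor $(-) \comp \mathsf{Com}^*[n] \comp V$ and using that this functor carries a morphism of $\mathsf{e}_n$-modules to one that commutes with the corresponding $\kappa$-twisted differentials gives the desired isomorphism of differential graded $\mathsf{e}_n$-modules.

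The main obstacle will be verifying on the level of combinatorics that the two a priori different $\mathsf{e}_n$-module structures — the one on $H_*(\mathsf{Graphs}_M, \delta_{\text{split}})$ given by $(\circ~\circ)$ and $(\circ\text{-}\circ)$ insertions, and the canonical one on $\mathsf{Com} \comp H^*(M) \comp \mathsf{Lie}[n-1]$ — are identified on the nose by the long-graph/tall-tree duality used in Proposition \ref{prop:quasi}, together with the attendant signs coming from the shifts $[n]$ and $[n-1]$. Once this bookkeeping is handled, the lemma becomes a formal consequence of the functoriality of the twisted composition product along morphisms of right $\mathsf{e}_n$-modules.
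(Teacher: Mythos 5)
Your overall strategy is the right one and matches the paper's: recognize both $d_\kappa$ and $d^{\mathsf{com}}_{\mathsf{mod}}$ as twisted-composition-product differentials associated to the \emph{same} twisting morphism, applied to two different right $\mathsf{e}_n$-modules, and then let the $\mathsf{e}_n$-module isomorphism $\iota$ from Proposition~\ref{prop:quasi} intertwine them by functoriality. The paper spells this out by writing both $d_\kappa$ and $d^{\mathsf{com}}_{\mathsf{mod}}$ as a composite $(\mu_{(1)} \comp \mathrm{id}) \circ ((\mathrm{id} \comp_{(1)} \kappa) \comp \mathrm{id}) \circ (\mathrm{id} \comp' \Delta)$ (with $\kappa$ replaced by $\iota\kappa$ in the second case) and then pushing $\iota \comp \mathrm{id}_{\mathsf{Com}^*}$ past each factor of this composite using only that $\iota$ is a morphism of operads/modules and that $\mathrm{id}_{\mathsf{Com}^*}$ is a morphism of cooperads. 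Your observation that prop:quasi already gives an isomorphism of $\mathsf{e}_n$-modules (not merely graded vector spaces) is exactly the input that makes this work, so the ``main obstacle'' you flag at the end is already handled there rather than being a gap you must close inside this lemma.

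There is, however, a concrete error in your description of the twisting morphism. The Koszul twisting morphism $\kappa: \mathsf{Com}^*[n] \to \mathsf{Lie}[n-1]$ sends the binary $\mathsf{Com}^*$-cogenerator to the \emph{Lie bracket}, not to the commutative product — this is the content of the Koszul pair $(\mathsf{Lie}, \mathsf{Com}^*)$. Consequently $d^{\mathsf{com}}_{\mathsf{mod}}$, induced on $\mathsf{Graphs}_M \comp \mathsf{Com}^*[n] \comp V$ by the composite $\iota\kappa$, inserts the edge graph $(\circ\text{-}\circ)$, not the disjoint-vertex graph $(\circ\ \circ)$; and in $\mathcal{F}_{H^*(M)} = \mathsf{Com} \comp H^*(M) \comp \mathsf{Lie}[n-1]$ the differential $d_\kappa$ lands in the $\mathsf{Lie}[n-1]$ factor, not the leading $\mathsf{Com}$ factor. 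Your plan survives after this correction — the functoriality argument does not depend on which generator $\kappa$ hits, only on $\iota$ preserving it — but as written your Step~2 describes the wrong differential and a reader checking it combinatorially, as you propose, would find the formulas do not match.
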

\begin{proof}
    One needs to verify that the map is compatible with the differentials.
    We will omit the terms $\mathsf{Com} \comp H(M)$ in $\mathcal{F} _{H_*(M)}$ as 
    only the terms $\mathsf{Lie} \comp \mathsf{Com}^*$ play an important role.
    Since $\mathsf{Com}^* = \mathsf{Lie} ^{\vee} $ is the Koszul dual operad of 
    $\mathsf{Lie} $ there exists a natural twisting morphism
    \begin{align*}
            \kappa : \mathsf{Com}^* \left[ n \right]  \rightarrow \mathsf{Lie} \left[ n-1 \right] 
    \end{align*}
    of degree $-1$, sending the generator of $\mathsf{Com} ^*$ to the Lie bracket, the 
    generator of $\mathsf{Lie} $. With this we construct the unique differential $d _{\kappa} $ on 
    the Koszul complex $\left(\mathsf{Lie} \comp  \mathsf{Com} ^*, d _{\kappa} \right)$ as
    \begin{center}
    \begin{tikzpicture}[scale=1.0]
    \node (k0) at (-0.4,1) {$d_{\kappa} ~ : $};
    \node (k1) at (1,1) {$\mathsf{Lie} \comp \mathsf{Com} ^* $};
    \node (k2) at (6,1) {$\mathsf{Lie} \comp \left( \mathsf{Com}^* ; \mathsf{Com} ^* \comp \mathsf{Com} ^* \right)  $};
    \node (k3) at (3,0) {$\simeq \left(\mathsf{Lie} \comp _{(1)} \mathsf{Com} ^* \right) \comp \mathsf{Com} ^*$};
    \node (k4) at (10,0) {$\left( \mathsf{Lie} \comp _{(1)} \mathsf{Lie}  \right) \comp \mathsf{Com} ^*$};
    \node (k5) at (15,0) {$\mathsf{Lie} \comp \mathsf{Com}^*$};
    \draw[->, black, font=\normalsize]
    (k1) edge node[above] {$\text{id}_{\mathsf{Lie}} \comp ' \Delta$} (k2)
    (k3) edge node[above] {$\left( \id _{\mathsf{Lie}} \comp _{(1)} \kappa \right) \circ \id _{\mathsf{Com}^*} $} (k4)
    (k4) edge node[above] {$\mu _{(1)} \comp \id _{\mathsf{Com} ^*}$} (k5);
    \end{tikzpicture}
    \end{center}
where $\Delta$ denotes only in this proof the decomposition map of the cooperad $\mathsf{Com}^*$.
The symbols $\comp _{(1)}$, $\mu _{(1)}$ and $\comp '$ refer to the infinitesimal composite, the 
infinitesimal composition map and the infinitesimal composite of morphisms respectively, which are explained in chapter 6 of Loday's and Valette's book \cite{lv}.
On $\mathsf{Graphs} _M \comp \mathsf{Com}^* \left[ n \right] $ the differential $d^{\mathsf{com}} _{\mathsf{mod}}$ is 
constructed in a very similar fashion using instead of the twisting morphism $\kappa$ 
the composition $\iota \kappa$ with the inclusion from equation (\ref{eq:inc}).
This time we represent this mechanism graphically.
\begin{tikzpicture}[line cap=round,line join=round,>=triangle 45,x=1cm,y=1cm]
\node (g1) at (2,5.5) {$\mathsf{Graphs}_M$};
\node (c11) at (1,4.5) {$\mathsf{Com}^*$};
\node (c12) at (2,4.5) {$\mathsf{Com}^*$};
\node (c13) at (3,4.5) {$\mathsf{Com}^*$};
\node (r1) at (4,4.5) {$$};
\node (l2) at (7,4.5) {$$};
\node (g2) at (9,5.5) {$\mathsf{Graphs}_M$};
\node (c21) at (8,4.5) {$\mathsf{Com}^*$};
\node (c22) at (9,4.5) {$\mathsf{Com}^*$};
\node (c221) at (9,3.5) {$\mathsf{Com}^*$};
\node (c23) at (10,4.5) {$\mathsf{Com}^*$};
\draw[black,dotted] (8.5,3.2) -- (8.5,4.7) -- (9.5,4.7) -- (9.5,3.2) -- cycle;
\node (seq) at (1,1) {$\simeq$};
\node (g3) at (3,2) {$\mathsf{Graphs}_M$};
\node (c31) at (2,1) {$I$};
\node (c32) at (3,1) {$\mathsf{Com}^*$};
\node (c33) at (4,1) {$I$};
\node (c311) at (2,0) {$\mathsf{Com}^*$};
\node (c321) at (3,0) {$\mathsf{Com}^*$};
\node (c331) at (4,0) {$\mathsf{Com}^*$};
\draw[black, dotted] (1.8,.8) -- (1.8,2.3) -- (4.2,2.3) -- (4.2,.8) -- cycle;
\node (r3) at (4.5,1) {$$};
\node (l4) at (8.5,1) {$$};
\node (g4) at (10,2) {$\mathsf{Graphs}_M$};
\node (c41) at (9,1) {$I$};
\node (c42) at (10,1) {$\mathsf{Graphs}_M$};
\node (c43) at (11,1) {$I$};
\node (c411) at (9,0) {$\mathsf{Com}^*$};
\node (c421) at (10,0) {$\mathsf{Com}^*$};
\node (c431) at (11,0) {$\mathsf{Com}^*$};
\draw[black, dotted] (8.8,.8) -- (8.8,2.3) -- (11.2,2.3) -- (11.2,.8) -- cycle;
\node (r4) at (11.5,1) {$$};
\node (l5) at (14.5,1) {$$};
\node (g5) at (16,1.5) {$\mathsf{Graphs}_M$};
\node (c51) at (15,0.5) {$\mathsf{Com}^*$};
\node (c52) at (16,0.5) {$\mathsf{Com}^*$};
\node (c53) at (17,0.5) {$\mathsf{Com}^*$};
\draw[-, black, font=\normalsize]
(g1) edge node[below] {$$} (c11)
(g1) edge node[below] {$$} (c12)
(g1) edge node[below] {$$} (c13)
(g2) edge node[below] {$$} (c21)
(g2) edge node[below] {$$} (c22)
(g2) edge node[below] {$$} (c23)
(c22) edge node[below] {$$} (c221)
(g3) edge node[below] {$$} (c31)
(g3) edge node[below] {$$} (c32)
(g3) edge node[below] {$$} (c33)
(c31) edge node[below] {$$} (c311)
(c32) edge node[below] {$$} (c321)
(c33) edge node[below] {$$} (c331)
(g4) edge node[below] {$$} (c41)
(g4) edge node[below] {$$} (c42)
(g4) edge node[below] {$$} (c43)
(c41) edge node[below] {$$} (c411)
(c42) edge node[below] {$$} (c421)
(c43) edge node[below] {$$} (c431)
(g5) edge node[below] {$$} (c51)
(g5) edge node[below] {$$} (c52)
(g5) edge node[below] {$$} (c53);
\draw[->, black, font=\normalsize]
(r1) edge node[above] {$\id _{\mathsf{Graphs}_M} \comp '\Delta$} (l2)
(r3) edge node[above] {$\left( \id _{\mathsf{Graphs}_M} \comp _{(1)} \iota \kappa \right) \comp \id _{\mathsf{Com}^*} $} (l4)
(r4) edge node[above] {$\mu _{(1)} \comp \id _{\mathsf{Com}^*}$} (l5);
\end{tikzpicture}

\noindent
Thus using the fact that $\iota$ and
$\text{id}_{\mathsf{Com}^*}$ are morphisms of operads and cooperads respectively, we can 
verify the claim directly
\begin{align*}
\Big( \iota \comp \text{id} _{\mathsf{Com}^*} \Big) d _{\kappa}
&= \Big( \iota \comp \text{id} _{\mathsf{Com}^*} \Big) \left( \mu^{\mathsf{Lie}} _{(1)} \comp \text{id} _{\mathsf{Com}^*} \right) \Big( \text{id} _{\mathsf{Lie}} \comp ' \big[ \left( \kappa \comp \text{id} _{\mathsf{Com}^*} \right) \Delta \big]  \Big) \\
&= \left( \mu^{\mathsf{Graphs}_M }  _{(1)} \comp \text{id} _{\mathsf{Com}^*} \right) \Big( \left( \iota \comp_{(1)} \iota  \right) \comp \text{id} _{\mathsf{Com}^*} \Big)
    \Big( \left( \text{id} _{\mathsf{Lie}} \comp _{(1)} \kappa \right) \comp \text{id} _{\mathsf{Com}^*} \Big) \Big(\text{id}_{\mathsf{Lie}} \comp' \Delta \Big) \\
&= \left( \mu^{\mathsf{Graphs}_M }  _{(1)} \comp \text{id} _{\mathsf{Com}^*} \right) \Big( \left( \text{id}_{\mathsf{Graphs}_M}  \comp_{(1)} \iota \kappa \right) \comp \text{id} _{\mathsf{Com}^*} \Big)
    \Big( \left( \iota \comp_{(1)} \text{id} _{\mathsf{Com}^*} \right) \comp \text{id} _{\mathsf{Com}^*} \Big) \Big(\text{id}_{\mathsf{Lie}} \comp' \Delta \Big) \\
&= \left( \mu^{\mathsf{Graphs}_M }  _{(1)} \comp \text{id} _{\mathsf{Com}^*} \right) \Big( \left( \text{id}_{\mathsf{Graphs}_M}  \comp _{(1)}  \iota \kappa \right) \comp \text{id} _{\mathsf{Com}^*} \Big)
    \Big(\text{id}_{\mathsf{Graphs}_M} \comp' \Delta \Big) \Big( \iota \comp \text{id} _{\mathsf{Com}^*} \Big) \\
&= d^{\mathsf{com}}_{\mathsf{mod}} \Big( \iota \comp \text{id} _{\mathsf{Com}^*} \Big)
\end{align*}
\end{proof}
\noindent
We therefore denote (by abuse of notation) from now on both differentials by $d^{\mathsf{com}}_{\mathsf{mod}}$.
\subsection{Conclusion}
\label{sub:Conclusion}

With the results from the previous section, we are now able to infer the proof.
\begin{proof}[Proof of Theorem \ref{thm:main1}]
Applying once again Koszul duality results the next page of the spectral sequence from equation (\ref{eq:fp2}) .
\begin{align*}
        E^2 &= H_{\ast} (\mathsf{Com} \comp H^{\ast}(M) \comp \mathsf{Lie}\left[ n-1 \right] 
        \comp \mathsf{Com}^* \left[ n \right] \comp V, d^{\mathsf{com}}_{\mathsf{mod}}) \\
        &\simeq \mathsf{Com} \comp H^{\ast}(M) \comp V
\end{align*}
At this point we can conclude that the morphism $\varphi^2 : E^2 \rightarrow F^2$ on the second page is an 
isomorphism and therefore every induced $\varphi ^r$ for $r \geq 2$ 
will be an isomorphism by lemma (\ref{lmm:two}) as well. 
Consequently, as these spectral sequences converge, given by the fact that they are bounded, $H _{\ast} (\varphi)$ is an isomorphism and
$\varphi$ is a quasi-isomoprhism as claimed.
\hfill
\end{proof}

\section{The Twisted Polynomial Algebra}
\label{sec:twisted}
\noindent
As already explained in section (\ref{sub:Twisting}), one can twist the polynomial 
algebra $\mathcal{O}$ by a Maurer-Cartan element $m$ to obtain $\mathcal{O} \llbracket \hbar \rrbracket$.
In the following we will show how the above computation can be altered to comprise 
this broader case where the additional differential $d^m$ on $\mathcal{O} \llbracket \hbar \rrbracket$ appears.
\begin{thm}
\label{thm:main2}
    Let $M$ be a compact and oriented manifold $M$ with trivialized tangend bundle.
    Let moreover $V$ be the shifted cotangent bundle $V = T^* [1-n] \mathbb{R}^N$ and consider the 
    the polynomial algebra $\mathcal{O} = \mathcal{O} \left( V \right)$.
    Then the factorization homology 
    $\int_M \mathcal{O} \llbracket \hbar \rrbracket$ of $M$
    with coefficients in the twisted polynomial algebra
    $\mathcal{O} \llbracket \hbar \rrbracket = \mathcal{O} ^{\hbar m_1 + \hbar^2 m_2 + \dotsm}$
    is weakly equivalent to the 
    algebra of twisted polynomials $S(H(M) \otimes V)$.
\end{thm}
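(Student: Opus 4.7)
The plan is to reduce the twisted case to Theorem \ref{thm:main1} by filtering $\hbar$-adically, working throughout over the ground ring $\mathbb{R}\llbracket \hbar \rrbracket$ and extending every construction from Section \ref{sec:Compution of Factorization Homology} $\hbar$-linearly. The domain becomes $\mathcal{D}\llbracket \hbar \rrbracket$ with total differential $\partial + d^m$, where $d^m = -\{m,\cdot\}$ is applied termwise in the $\mathcal{O}$-slot of $\mathcal{D}$.

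First I would identify the Maurer-Cartan element that must govern the target. Since $1 \in H^0(M)$ pairs trivially with itself under the Poincar\'e pairing (the integral $\langle 1,1\rangle = \int_M 1$ vanishes for $n \geq 1$), the element $M := 1 \otimes m \in \mathcal{S}\llbracket \hbar \rrbracket$ satisfies $\{M,M\}_{\mathcal{S}} = \langle 1,1\rangle \{m,m\} = 0$, and $\Delta(M) = 0$ because the defining polynomial $m$ contains no $p_j$/$x_j$ pairs on which the BV operator can contract against a single $1$-decoration. Thus $M$ is genuinely Maurer-Cartan and determines a twisting differential $d^M = -\{M,\cdot\}_{\mathcal{S}}$ on the codomain. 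I extend $\varphi$ by the same formula (now $\hbar$-linearly) to obtain a candidate morphism
\[
\varphi_m \colon (\mathcal{D}\llbracket \hbar \rrbracket,\; \partial + d^m) \longrightarrow (\mathcal{S}\llbracket \hbar \rrbracket,\; \Delta + d^M).
\]

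Next I would verify the extra compatibility $\varphi_m\, d^m = d^M\, \varphi_m$; the untwisted part $\varphi\,\partial = \Delta\,\varphi$ is Proposition \ref{prop:dcptl}. For the new piece, the key observation is that $d^m$ acts by replacing an $\mathcal{O}$-slot $f_i$ by $-\{m,f_i\}$, and the product-rule calculation of Remark \ref{rmk:act} shows that this is equivalent, under $\varphi$, to adjoining to the graph an auxiliary external vertex decorated by $1 \in H^0(M)$ and carrying the polynomial $m$, then summing over all ways of connecting it to the original vertices by edges. Aggregating those reconnections is exactly the operator $-\{1 \otimes m,\cdot\}_{\mathcal{S}} = d^M$, whence the intertwining.

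Finally, I would filter both sides by the descending $\hbar$-adic filtration $\mathcal{F}^p = \hbar^p(-)$, which is exhaustive, complete, bounded above by $p = 0$, and preserved by $\varphi_m$. Because $m = \hbar m_1 + \hbar^2 m_2 + \dotsm$ starts at $\hbar^1$, both $d^m$ and $d^M$ strictly raise $\hbar$-weight, so on the associated graded only $\partial$ and $\Delta$ survive. In each $\hbar$-weight the graded map $\mathsf{gr}\,\varphi_m$ reduces to the untwisted $\varphi$ of Theorem \ref{thm:main1}, hence is a quasi-isomorphism; the mapping lemma (\ref{lmm:two}) then promotes this to a quasi-isomorphism of the full complexes. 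The main obstacle I expect is technical rather than conceptual: one has to check that the $\hbar$-adic spectral sequence interacts cleanly with the four nested filtrations of Section \ref{sub:Filtrations}, so that Theorem \ref{thm:main1} can be invoked page-by-page on the $\hbar$-graded complex without issues of convergence or exhaustiveness introduced by the completed tensor product with $\mathbb{R}\llbracket\hbar\rrbracket$.
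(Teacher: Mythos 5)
Your final reduction step (filter by powers of $\hbar$, kill $d^m$ on the associated graded, invoke Theorem \ref{thm:main1} page-by-page) is exactly what the paper does and is sound. However, the construction of the chain map $\varphi_m$ that precedes it has a genuine gap, and the target Maurer--Cartan element is wrong.

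You extend the \emph{unmodified} map $\varphi$ $\hbar$-linearly and claim $\varphi_m\, d^m = d^M\,\varphi_m$ with $M = 1\otimes m$. This fails. First note that $d^M = -\{1\otimes m,\cdot\}_{\mathcal{S}}$ only pairs the decoration $1$ against the \emph{decorations} of the other slot through the Poincar\'e pairing $\langle 1,\omega\rangle$, which is nonzero only when $\omega$ is a top-degree class. Take $\Gamma$ a single undecorated external vertex and any $f$ with $\{m,f\}\ne0$: then $\varphi(\Gamma\otimes d^m f)=-\{m,f\}$ (viewed inside $\mathcal{S}$ with all decorations equal to $1$) is generically nonzero, while $d^M\varphi(\Gamma\otimes f) = -\{1\otimes m,\,1\otimes f\}_{\mathcal{S}}=\langle 1,1\rangle\{m,f\}=0$. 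Thus $\varphi_m$ is not a chain map with that target differential. The appeal to Remark \ref{rmk:act} only shows that $\varphi(\Gamma\otimes\{m,f_i\}\otimes\cdots)$ equals $\varphi$ applied to a \emph{larger graph with an extra external vertex carrying $m$}; it does not equal $\{1\otimes m,\varphi(\Gamma\otimes f_I)\}_{\mathcal{S}}$, because the $\Xi$-operators in $\varphi'$ use the bare Poisson bracket $\{y_k,\tilde y_k\}$ with no degree constraint, whereas $\{\cdot,\cdot\}_{\mathcal{S}}$ is weighted by $\langle\alpha,\beta\rangle$.

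The missing idea is the \emph{modified} map $\varphi^m$: instead of killing graphs with internal vertices, $\varphi^m$ evaluates each internal vertex at the polynomial $m$ and sums over contractions as usual. This is what makes Lemma \ref{lmm:ds} work: $\delta_{\text{split}}$ adds one internal vertex connected by an edge, and after $\varphi^m$ that vertex contributes exactly the bracket $-\{m,\cdot\}$, so $(\delta_{\text{split}}\Gamma)\comp_m f_I = (-1)^{|\Gamma|+1}\Gamma\comp_m d^m f_I$ and the two differentials cancel. With this map, the twisting part $\delta^{\mathsf{z}_M}$ no longer dies: its image is $\{\mathsf{Z}^m_M,\cdot\}_{\mathcal{S}}$ where $\mathsf{Z}^m_M=\varphi^m(\mathsf{z}_M)$ is the image of the full partition function, not $1\otimes m$. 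One then proves $\mathsf{Z}^m_M$ is Maurer--Cartan in $\mathcal{S}$ (using $\{m,m\}=0$ and the MC equation for $\mathsf{z}_M$), takes $\Delta^m = \Delta + \{\mathsf{Z}^m_M,\cdot\}_{\mathcal{S}}$ as the target differential, and only \emph{then} applies the $\hbar$-filtration you describe. Your proposal omits both the change of map and the contribution of the partition function, so the morphism it produces is not a morphism of complexes.
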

\noindent
First we add the twisting differential $d^m$ to the composition
\begin{align*}
    \left( \mathcal{D}^m, \partial^m \right)
    := \left( \mathsf{Graphs}_M~\widehat{\comp}_{\mathsf{e}_n}~\mathcal{O} \llbracket \hbar \rrbracket, \partial + d^m \right)
    = \left(\mathsf{Graphs}_M, \delta \right)~\widehat{\comp} _{\mathsf{e}_n}~\left(\mathcal{O}\llbracket \hbar \rrbracket, d^m \right)
\end{align*}
The revised map $\varphi^m : \left( \mathcal{D}^m, \partial ^m \right) \rightarrow \left( \mathcal{S}, \Delta \right)$
now recognizes internal vertices and assigns the Maurer-Cartan element $m$ to all of 
them when $\varphi^m$ is applied. For convenience we use again the notation
$\varphi^m \left( \Gamma \otimes f_I \right)  = \Gamma \comp _m f_I$.
Denote the image of the partition function as $\varphi^m (\mathsf{z}_M) = \mathsf{Z}_M^m$.
In contrast to proposition (\ref{prop:dcptl}), $\delta _{\text{split}}$ now corresponds to
$d^m$ on $\mathcal{O} \llbracket \hbar \rrbracket$.
\begin{lmm}
\label{lmm:ds}
    It holds $\left( \delta_{\text{split}} \Gamma  \right) \comp_m f_I = (-1) ^{\vert \Gamma \vert + 1} \Gamma \comp_m d^m f_I  $
\end{lmm}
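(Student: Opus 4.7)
The plan is to unpack $\delta_{\text{split}} \Gamma = (-1)^{|\Gamma|} \sum_{v \in \Gamma} \Gamma \circ_v (\circ\text{-}\bullet)$ and analyze each term separately under $\varphi^m$, splitting the sum according to whether the chosen vertex $v$ is external or internal to $\Gamma$.

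For an external vertex $v = i$, the new internal vertex introduced by the splitting is labelled by $m$ under $\varphi^m$ and is connected to $i$ by a fresh edge. That edge contributes an extra operator $\Xi_{i,\text{new}}$ acting on the pair $(f_i, m)$, and after $\mu_{\text{mult}}$ this extra factor produces exactly the Poisson bracket $\{f_i, m\}_{\mathcal{O}}$ in the $i$-th slot. The sum over reconnecting the other edges incident to $i$ between the original vertex and the new one is handled by the same Leibniz-type identity already established in Remark \ref{rmk:act}, so that all remaining $\Xi$ operators continue to act correctly on the resulting expression. Using $d^m = -\{m, \cdot\}$ together with the graded antisymmetry of the Poisson bracket, one obtains $\{f_i, m\} = \pm d^m f_i$ with a sign determined by $n$ and the parities of $f_i$ and $m$. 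Summing over $i$ and noting that $d^m$ extends to $\mathcal{O}^{\otimes r}$ as a graded derivation, these contributions accumulate to $(-1)^{|\Gamma|+1}\, \Gamma \comp_m d^m f_I$.

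For an internal vertex $v$ of $\Gamma$, both $v$ and the new vertex $w$ are labelled by $m$ under $\varphi^m$, and the existing edges at $v$ are distributed between $v$ and $w$ in the splitting. Summing over all such distributions allows the Leibniz property of the $\Xi$ operators to combine the two $m$-slots into a single effective factor, so that the remaining new-edge operator $\Xi_{vw}$ contributes $\sum_k \{y_k, \tilde y_k\} (\partial_{y_k} m)(\partial_{\tilde y_k} m)$, which is proportional to $\{m, m\}_{\mathcal{O}}$. This vanishes by the Maurer-Cartan equation for $m$, so the internal-vertex contributions are identically zero.

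The hard part will be the careful bookkeeping of signs. Contributions arise from the prefactor $(-1)^{|\Gamma|}$ in the definition of $\delta_{\text{split}}$, from the graded commutation of the new operator $\Xi_{i,\text{new}}$ past the other edge operators already attached at $i$, from the position in the tensor product $f_I$ at which $d^m$ is inserted, and from the graded antisymmetry $\{f, m\} = (-1)^{n + |f||m|} \{m, f\}$. Reconciling these into the uniform factor $(-1)^{|\Gamma|+1}$ is where the technical work lies; a secondary subtlety is verifying that the Leibniz cancellation in the internal vertex case genuinely applies uniformly, irrespective of the other $\Xi$ operators already acting at $v$ or $w$, which should reduce to the commutativity of the partial derivatives involved.
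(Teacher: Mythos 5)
Your approach is essentially the same as the paper's: unpack $\delta_{\text{split}}\Gamma$ term by term, label the newly created internal vertex with $m$ under $\varphi^m$, and read off the new-edge operator as producing $\{m,\cdot\} = -d^m$ on the function attached to the split vertex, using the Leibniz identity from Remark~\ref{rmk:act} to redistribute the existing edges. The paper's proof is considerably terser, carrying the sign through a four-line computation only for the ``first summand'' (splitting the external vertex $1$) and leaving the internal-vertex contributions implicit; you correctly observe that these contribute a factor proportional to $\{m,m\}$ which vanishes by the Maurer--Cartan equation, an observation the paper itself invokes only later (in the form $\mathsf{Z}_M^{\{m,m\}}=0$) when applying the lemma to $\mathsf{z}_M$. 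Conversely, you explicitly defer the sign bookkeeping as ``the hard part'', whereas the paper does close that loop for the one summand it treats, so to be a complete proof your write-up would still need to reconcile the prefactor $(-1)^{|\Gamma|}$, the sign in $d^m=-\{m,\cdot\}$, and any Koszul signs from moving $\Xi_{i,\mathrm{new}}$ past the other operators into the stated $(-1)^{|\Gamma|+1}$.
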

\begin{proof}
    It is sufficient to prove this statement for the first summand of each differential.
    \begin{align*}
            \Gamma \comp_m d^m_{(1)}  f_I
            &= \Gamma \comp_m  \big( d^m f_1 \otimes \dotsc \otimes f_k \big) \\
            &= \Gamma \comp_m \big( - \lbrace m, f_1 \rbrace \otimes \dotsc \otimes f_k \big) \\
            &= - \big( \Gamma \comp _1 \text{$\circ$-$\bullet$} \big) \comp_m \big( f_1 \otimes \dotsc \otimes f_k \big) \\
            &= (-1) ^{\vert \Gamma \vert + 1}  \left( \delta_{\text{split}, (1)} \Gamma \right) \comp_m
            \big( f_1 \otimes \dotsc \otimes f_k \big)
    \end{align*}
\end{proof}
\noindent
With this we can show 
\begin{lmm}
    A Maurer-Cartan element $m \in \mathcal{O}$, leads to a Maurer-Cartan element $\mathsf{Z}_M^m \in \mathcal{S}$.
\end{lmm}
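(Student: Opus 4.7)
The plan is to transport the Maurer--Cartan equation satisfied by the partition function $\mathsf{z}_M$ in $\mathsf{Graphs}_M$ across the map $\varphi^m$ and match each resulting term with the corresponding piece of the Maurer--Cartan equation for $\mathsf{Z}_M^m$ in $\mathcal{S}$, namely
\begin{align*}
\Delta \mathsf{Z}_M^m + \tfrac{1}{2}\{\mathsf{Z}_M^m,\mathsf{Z}_M^m\}_{\mathcal{S}} = 0.
\end{align*}
Concretely, I would start from $\delta \mathsf{z}_M + \tfrac{1}{2}\{\mathsf{z}_M,\mathsf{z}_M\}_G = 0$, apply $\varphi^m$ to both sides, and analyse the three resulting contributions $\varphi^m(\delta_{\text{pair}} \mathsf{z}_M)$, $\varphi^m(\delta_{\text{split}} \mathsf{z}_M)$ and $\varphi^m(\{\mathsf{z}_M,\mathsf{z}_M\}_G)$ separately.

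First I would check that the intertwining property $\varphi \delta_{\text{pair}} = \Delta \varphi$ from Proposition \ref{prop:dcptl} extends verbatim to $\varphi^m$, including on graphs whose vertices are internal. The verification in Proposition \ref{prop:dcptl} is purely local at a pair of vertices carrying complementary decorations: whether the polynomial attached to such a vertex is an external $f_i$ or the assigned Maurer--Cartan element $m$ makes no difference to the computation, so one obtains $\varphi^m(\delta_{\text{pair}} \mathsf{z}_M) = \Delta \mathsf{Z}_M^m$. The bracket compatibility $\varphi^m(\{\Gamma,\Sigma\}_G) = \{\varphi^m\Gamma,\varphi^m\Sigma\}_{\mathcal{S}}$ then follows as in the last line of the proof of Proposition \ref{prop:dcptl}, since both brackets are defined as the failure of the respective pair-differential to be a derivation with respect to the concatenation product.

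The real work is in the $\delta_{\text{split}}$ term. Since $\mathsf{z}_M$ has only internal vertices, $\delta_{\text{split}} \mathsf{z}_M$ splits an internal vertex out of another internal vertex, creating two adjacent internal vertices joined by a new edge and summing over all ways of redistributing the incident edges. Applying $\varphi^m$ assigns $m$ to each of the two resulting internal vertices and converts the new edge into a Poisson bracket, producing an expression of the form $\{m,m\}$ evaluated inside the surrounding graph structure. Here I would prove the internal analogue of Lemma \ref{lmm:ds}: splitting out of an internal vertex carrying $m$ corresponds, under $\varphi^m$, to the operation $-\{m,m\}$ inserted at that vertex (with the symmetrization factor of $\tfrac{1}{2}$ absorbed by the two ways of labelling the two new vertices). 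The Maurer--Cartan hypothesis $\{m,m\}=0$ then forces $\varphi^m(\delta_{\text{split}} \mathsf{z}_M) = 0$. The hard point here will be keeping track of signs and of the combinatorics of edge redistribution, to see that the sum over splittings reassembles cleanly into $\{m,m\}$ rather than some more complicated expression; I expect this to follow by the same local-at-a-vertex argument used in the proof of Lemma \ref{lmm:ds}, just with the internal-internal half-edge $\bullet\text{-}\bullet$ in place of $\circ\text{-}\bullet$.

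Putting the three pieces together yields
\begin{align*}
\Delta \mathsf{Z}_M^m + \tfrac{1}{2}\{\mathsf{Z}_M^m,\mathsf{Z}_M^m\}_{\mathcal{S}}
= \varphi^m\!\left(\delta \mathsf{z}_M + \tfrac{1}{2}\{\mathsf{z}_M,\mathsf{z}_M\}_G\right) = 0,
\end{align*}
which is the Maurer--Cartan equation for $\mathsf{Z}_M^m$ in $\mathcal{S}$, completing the proof.
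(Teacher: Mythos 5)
Your proof is correct and follows the paper's approach exactly: apply $\varphi^m$ to the Maurer--Cartan equation for $\mathsf{z}_M$ in $\mathsf{Graphs}_M$, identify the $\delta_{\text{pair}}$ and bracket terms with $\Delta\mathsf{Z}_M^m$ and $\tfrac{1}{2}\{\mathsf{Z}_M^m,\mathsf{Z}_M^m\}_{\mathcal{S}}$, and kill the $\delta_{\text{split}}$ contribution by the hypothesis $\{m,m\}=0$. You are slightly more careful than the paper in flagging that Lemma~\ref{lmm:ds} as stated treats a split off an external vertex whereas $\mathsf{z}_M$ has only internal vertices, so an internal-vertex variant is what is really being invoked; the paper applies Lemma~\ref{lmm:ds} directly and writes $\varphi^m(\delta_{\text{split}}\mathsf{z}_M)=(-1)^{\vert\mathsf{z}_M\vert+1}\mathsf{Z}_M^{\{m,m\}}=0$, leaving that point implicit.
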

\begin{proof}
    We combine the assumptions that $m$ satisfies the Maurer-Cartan equation $\lbrace m, m \rbrace = 0$ in
    $\left( \mathcal{O}, \lbrace ~,~ \rbrace\right)$ and $\mathsf{z}_M$ satisfies the
    Maurer-Cartan equation in $\left( \mathsf{Graphs}_M, \delta, \lbrace ~,~ \rbrace _G \right)$.
    Applying $\varphi^m$ to the latter one results
    \begin{align*}
        0 &= \varphi^m \left( \delta \mathsf{z}_M + \frac{1}{2} \lbrace \mathsf{z}_M, \mathsf{z}_M \rbrace \right) \\
            &= \varphi^m \left( \left( \delta_{\text{split}} + \delta_{\text{pair}} \right) \mathsf{z}_M + \frac{1}{2} \lbrace \mathsf{z}_M, \mathsf{z}_M \rbrace _G  \right) \\
            &= \varphi^m \left( \delta_{\text{split}} \mathsf{z}_M \right) + \Delta (\varphi^m (\mathsf{z}_M))
            + \varphi^m \left( \frac{1}{2} \lbrace \mathsf{z}_M, \mathsf{z}_M \rbrace _G \right) \\
            &= \Delta (\mathsf{Z}_M^m) + \frac{1}{2} \lbrace \mathsf{Z}_M^m, \mathsf{Z}_M^m \rbrace _{\mathcal{S}}
    \end{align*}
    where $\varphi^m$ is compatible with the brackets and
    $\varphi^m \left( \delta_{\text{split}} \mathsf{z}_M \right)$ vanishes due to lemma (\ref{lmm:ds}):
    \begin{align*}
        \varphi^m \left( \delta_{\text{split}} \mathsf{z}_M \right) 
            = (-1) ^{\vert \mathsf{z}_M \vert + 1}  \mathsf{Z}_M ^{\lbrace m,m \rbrace} = 0
    \end{align*}
\end{proof}
\noindent
As always, a Maurer-Cartan element gives rise to a twisted differential
$\Delta^m = \Delta + \lbrace \mathsf{Z}_M^m,~  \rbrace _{\mathcal{S}}$.
With these modifications we can show analogous to proposition (\ref{prop:dcptl}) that
\begin{prop}
    The map $\varphi^m: \left(\mathcal{D} ^m, \partial ^m, \lbrace , \rbrace _{\mathcal{D}} \right) \rightarrow
    \left( \mathcal{S}, \Delta^m, \lbrace ,  \rbrace _{\mathcal{S}}   \right)  $ is a morphism of dg Lie $\mathsf{e}_n$-modules.
\end{prop}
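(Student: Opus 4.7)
The plan is to mirror the proof of Proposition \ref{prop:dcptl}, verifying summand by summand that $\varphi^m \partial^m = \Delta^m \varphi^m$, where $\partial^m = \delta_{\text{split}} + \delta_{\text{pair}} + \delta^{\mathsf{z}_M} + d^{\mathsf{com}}_{\mathsf{mod}} + d^{\mathsf{com}}_{\mathsf{alg}} + d^{\mathsf{lie}}_{\mathsf{mod}} + d^{\mathsf{lie}}_{\mathsf{alg}} + d^m$ and $\Delta^m = \Delta + \lbrace \mathsf{Z}_M^m, - \rbrace_{\mathcal{S}}$. The $\mathsf{e}_n$-module compatibility was verified at the level of underlying maps in Remark \ref{rmk:act} and is unaffected by the twists. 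Once the chain map property is in hand, compatibility with the Lie brackets is formal, because both $\lbrace , \rbrace_G$ and $\lbrace , \rbrace_{\mathcal{S}}$ are defined as the obstruction to $\delta_{\text{pair}}$ (respectively $\Delta$) being a derivation of the corresponding product, so it is a direct consequence of $\varphi^m \delta_{\text{pair}} = \Delta \varphi^m$.

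For the chain map identity I would check four things. First, $\varphi^m \delta_{\text{pair}} = \Delta \varphi^m$: the local calculation of Proposition \ref{prop:dcptl} on a pair of complementary external decorations carries over verbatim, and a pairing involving a decoration on an internal vertex simply yields a Poisson bracket acting on the $m$ inserted there, which matches exactly the corresponding local term of $\Delta$ applied to the polynomial $\varphi^m$ produces. Second, $\varphi^m(\delta_{\text{split}} + d^m) = 0$: this is immediate from Lemma \ref{lmm:ds} once the Koszul sign $d^m(\Gamma \otimes f_I) = (-1)^{\vert \Gamma \vert} \Gamma \otimes d^m f_I$ is accounted for, so the two differentials cancel each other under $\varphi^m$. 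Third, $\varphi^m \delta^{\mathsf{z}_M} = \lbrace \mathsf{Z}_M^m, \varphi^m(-) \rbrace_{\mathcal{S}}$: writing $\delta^{\mathsf{z}_M} = \lbrace \mathsf{z}_M, - \rbrace_G$, this follows from the bracket compatibility together with the identification $\varphi^m(\mathsf{z}_M) = \mathsf{Z}_M^m$. Fourth, the bar-cobar differentials $d^{\mathsf{com}}_{\mathsf{mod}}, d^{\mathsf{com}}_{\mathsf{alg}}, d^{\mathsf{lie}}_{\mathsf{mod}}, d^{\mathsf{lie}}_{\mathsf{alg}}$ vanish under $\varphi^m$ for the same reason as in the untwisted case. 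Summing the four contributions gives precisely $\Delta \varphi^m + \lbrace \mathsf{Z}_M^m, \varphi^m(-) \rbrace_{\mathcal{S}} = \Delta^m \varphi^m$.

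The main obstacle is the reorganization of which summands vanish and which contribute, compared to the untwisted setting. In Proposition \ref{prop:dcptl} both $\delta_{\text{split}}$ and $\delta^{\mathsf{z}_M}$ produced graphs containing internal vertices and were therefore killed by $\varphi$, leaving only the matching $\delta_{\text{pair}} \leftrightarrow \Delta$. Here $\varphi^m$ feeds $m$ into every internal vertex, so $\delta_{\text{split}}$ must now cancel the newly introduced algebra differential $d^m$, while $\delta^{\mathsf{z}_M}$ must reproduce exactly the new twisting summand $\lbrace \mathsf{Z}_M^m, - \rbrace_{\mathcal{S}}$ of $\Delta^m$ through bracket compatibility. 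The delicate part is keeping the Koszul signs of Lemma \ref{lmm:ds} consistent with those of the bracket compatibility, and verifying that pairings between decorations on internal vertices inside $\delta_{\text{pair}}$ do not produce anomalous terms beyond what $\Delta$ sees on the image; consistency on that last point ultimately rests on the Maurer-Cartan equations for $m$ and for $\mathsf{z}_M$, which have already been transported to $\mathsf{Z}_M^m$ in the preceding lemma.
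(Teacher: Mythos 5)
Your proof is correct and follows the same route as the paper's: cancel $\delta_{\text{split}}$ against $d^m$ via Lemma \ref{lmm:ds}, match $\delta_{\text{pair}}$ with $\Delta$ as in Proposition \ref{prop:dcptl}, and identify $\delta^{\mathsf{z}_M} = \lbrace \mathsf{z}_M,-\rbrace_G$ with $\lbrace \mathsf{Z}_M^m,-\rbrace_{\mathcal{S}}$ using $\varphi^m(\mathsf{z}_M)=\mathsf{Z}_M^m$ and bracket compatibility. You spell out slightly more than the paper does (the vanishing of the bar-cobar summands and the formal derivation of bracket compatibility from the $\delta_{\text{pair}}\leftrightarrow\Delta$ correspondence), but the decomposition and the key lemmas used are identical.
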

\begin{proof}
    For $\Gamma \in \mathsf{Graphs}_M$ and $f_J \in \mathcal{O} ^{\otimes} $ we compute with the help of lemma (\ref{lmm:ds}) 
    \begin{align*}
        \varphi^m \big( \partial \left( \Gamma \otimes f_J \right) \big)
        &= \left( \delta_{\text{split}} + \delta_{\text{pair}} + \delta^{\mathsf{z}_M} \right) \Gamma \comp_m f_J
        + (-1) ^{\vert \Gamma \vert}  \Gamma \comp_m d^m f_J \\
        &= \left( \delta_{\text{pair}} + \delta^{\mathsf{z}_M} \right) \Gamma \comp_m f_J \\
        &= \left( \delta_{\text{pair}} \Gamma \right) \comp_m f_J + \lbrace \mathsf{z}_M, \Gamma \rbrace \comp_m f_J \\
        &= \Delta \left( \Gamma \comp_m f_J \right) + \lbrace \mathsf{Z}_M^m, \Gamma \comp_m f_J \rbrace \\
        &= \Delta^m \left( \Gamma \comp_m f_J \right)
    \end{align*}
\end{proof}
\noindent
\begin{proof}[Proof of Theorem \ref{thm:main2}]
We are now in the position to reduce this case to Theorem (\ref{thm:main1}).
Therefore we filter by the number of factors $\hbar$ in
$\mathcal{D}^m$ and $\mathcal{S}$ respectively. 
Only the differentials $d^m$ and $\lbrace \mathsf{Z}_M^m, ~ \rbrace$ raise this 
degree by one and therefore vanish in the associated graded.  All other differentials 
remain unchanged and we obtain as the associated graded 
complex the same complex $\mathcal{D}$ as before in subsection (\ref{sub:Filtrations}).
\end{proof}

\addcontentsline{toc}{section}{Bibliography}
\bibliography{main}
\bibliographystyle{ieeetr}  
\end{document}